\documentclass[11pt]{article}
\usepackage{amsmath}
\usepackage{mathtools}
\usepackage{amssymb}
\usepackage{mathabx}
\usepackage{graphicx}
\usepackage{amsthm}
\usepackage{multicol}
\usepackage{geometry}
\usepackage{graphicx}
\usepackage{hyperref}
\usepackage{setspace}
\usepackage{enumerate}
\usepackage{xcolor}
\usepackage{tikz}
\usepackage{algorithm2e}
\usepackage{pgfplots}
\usepackage{tikz-3dplot}
\usepackage{longtable}

\newtheorem{lem}{Lemma}[section]
\newtheorem{thm}[lem]{Theorem}
\newtheorem{prop}[lem]{Proposition}
\newtheorem{cor}[lem]{Corollary}
\newtheorem{defn}[lem]{Definition}
\newtheorem{exm}[lem]{Example}
\newtheorem{rmk}[lem]{Remark}

\newtheorem{prob}[lem]{Problem}

\newcommand{\kw}{\noindent\textbf{Keywords: }}
\newcommand{\msc}{\noindent\textbf{2020 MSC: }}
\newcommand{\p}{\noindent \mathrm{P}}
\newcommand{\pr}{\noindent \mathrm{Prox}}

\newcommand{\N}{\noindent \mathrm{N}}
\newcommand{\T}{\noindent \mathrm{T}}

\newcommand{\RR}{\noindent \mathbb{R}}
\newcommand{\NN}{\noindent \mathbb{N}}

\newcommand{\HH}{\noindent \mathcal{H}}
\newcommand{\II}{\noindent \mathbb{I}}
\newcommand{\ii}{\noindent \iota}

\newcommand{\dom}{\noindent \mathrm{dom}}
\usepackage{tikz}
\usepackage{eso-pic}

\begin{document}
	\centerline{}
	\centerline{\Large{\bf 
			Exact and Approximate Solvability of Systems Involving}}
	\centerline{\Large{\bf 
		 Proximity Operators in Hilbert Spaces}}
	{\singlespacing
		\centerline{\bf {Mark Allien D. Roble}}
		\centerline{Institute of Mathematical Sciences, University of the Philippines Los Baños, Philippines}
		\centerline{{\it mdroble@.up.edu.ph}}
	}
	\centerline{} 
	\begin{abstract}
		\baselineskip=1\baselineskip
		Let $\HH$ be a real Hilbert space and let $(f_i)_{i\in I}$ be a finite family of proper, lower semicontinuous, and convex functions on $\HH$. This study investigates the existence and uniqueness of exact solutions to systems involving proximity operators of the form: \
		$(\forall i\in I)\ \pr{f_i}(x)=p_i,$
		where $(p_i)_{i\in I}$ is a prescribed collection of proximal points in $\HH$, which naturally generalize classical projection problems. We establish necessary and sufficient conditions for the existence of approximate solutions to such systems. Moreover, we introduce and derive several characterizations of the inverse proximal property (IPP), as a generalization of the inverse best approximation property (IBAP). Applications of the obtained results are presented in the contexts of a feasibility problem and signal recovery problem, demonstrating the relevance of the proposed framework to optimization and inverse problems in Hilbert spaces.
	\end{abstract}
	
	\kw{convex functions, subdifferential, proximity operator, inverse proximal property, inverse best approximation, feasibility problem}\newline
	\msc{26B25, 41A35, 47N10, 52A27}
	
	\section{Introduction}

The problem of finding a point that satisfies multiple constraints simultaneously is a central theme in optimization and variational analysis. In its classical form, this task appears as the convex feasibility problem, which seeks a point belonging to the intersection of a family of closed convex sets. Such problems arise naturally in signal processing, image reconstruction, image denoising, tomography, and inverse problems, where each constraint represents a piece of available information about the desired solution \cite{carmi, cens, heat, herm, convexfeas}. Over the years, this problem has emerged as a powerful and versatile mathematical framework, finding extensive applications in diverse fields such as medical physics, engineering, communication, and economics \cite{abol, bark, garc, ghol}. Owing to its broad applicability, numerous researchers have developed solution methods based on projection operators.

For a nonempty, closed, and convex subset $C$ of $\mathcal{H}$, the {\it projection operator onto $C$} at $x\in\HH$ is defined as $\p_Cx =\arg\min_{y\in C} \hspace{.1cm} \Vert x-y\Vert$. This projection can be traced back to the work of Zarantonello, who found its relevant application in the spectral theory \cite{zar}. Motivated by the fundamental role of projections in feasibility problems, Combettes and Reyes investigated in 2010 the system of orthogonal projections \cite[Eqn.(1.1)]{co}. They coined the term, \emph{inverse best approximation property} (IBAP) for a finite collection of nonempty closed linear subspaces $(C_i)_{i\in\II}$, if for every $(p_i)\in\bigtimes_{i\in\II} C_i$, there exists a vector $x\in\mathcal{H}$ satisfying the system
\begin{equation}
	\label{systemconvex}
	(\forall i\in\II)\ \p_{C_i}x =p_i.
\end{equation}
In their seminal work, they established several equivalent characterizations of the IBAP and identified necessary and sufficient conditions under which a family of closed linear subspaces satisfies this property \cite[Theorem 2.8]{co}.

A significant generalization of the convex feasibility problem is provided by the theory of monotone operators (see \cite[Chapters 20-25]{ba} for more details). These operators play a fundamental role in modern optimization because many mathematical models, including convex minimization problems, variational inequalities, equilibrium problems, and partial differential equations, can be formulated as monotone inclusions \cite{bene, guan, zhou}. Let $\mathcal{H}$ be a finite (or infinite) dimensional real Hilbert space, and let $\II$ be a nonempty finite index set. The compatibility problem consists of determining whether there exists a point $x\in\mathcal{H}$ that is simultaneously compatible with all the operators, in the sense that 
\begin{center}
	$\displaystyle 0\in \bigcap_{i\in\II} A_i(x)$, where $\{A_i\}_{i\in\II}$ is a collection of maximally monotone operators on $\HH$.
\end{center}
The latter formulation is commonly referred to as the \emph{monotone inclusion problem} and provides a unifying framework for a wide range of problems arising in convex optimization, variational inequalities, equilibrium theory, and feasibility problems. The compatibility problem has recently attracted attention as a unifying framework for studying systems of monotone operators and their associated fixed-point structures.

In this study, we focus on a special class of maximally monotone operators arising naturally in convex analysis. To this end, we recall several fundamental concepts taken from various convex analysis books \cite{ba, boyd, app8, ro2}. For a given function $f:\HH\longrightarrow(-\infty,+\infty]$, the {\it domain} of $f$ is the set $\dom f=\{x\in\HH \ | \ f(x)<+\infty\}$. We say that $f$ is {\emph{proper}} whenever $\dom f\neq\varnothing$; while it is called a {\emph{convex function}} if $\dom f$ is a convex set, and $$(\forall\alpha\in [0,1])\,(\forall x,y\in\dom f)\ f(\alpha x+(1-\alpha)y)\leq\alpha f(x)+(1-\alpha) f(y).$$
Moreover, $f$ is called a {\emph{lower semicontinuous}} if for each $\eta\in\RR$, the sublevel set $\{x\in\HH \ | \ f(x)\leq \eta\}$ is closed in $\HH$. We denote by $\Gamma_0(\HH)$ the set of all proper, convex, and lower semicontinuous functions defined on $\HH$. A classical example of a function in $\Gamma_0(\HH)$ is the \emph{indicator function} of a nonempty closed convex set $C$, defined to be $\ii_C(x)=0$ if $x\in C$, and $\ii_C(x)=+\infty$ if $x\notin C$. This example plays a central role in convex analysis and serves as a key building block in the study of monotone operators.

Let $\langle \cdot,\cdot \rangle$ and $\Vert\cdot\Vert$ denote the inner product and the norm induced by it on $\HH$, respectively. We now formally introduce the proximity operator, which serves as the central object of this study.
\begin{defn}{\rm \cite[Definition 12.23]{ba}} \rm
	Let $f\in\Gamma_0(\mathcal{H})$. The {\it proximity operator}, also known as {\it proximal mapping}, $\pr_f:\mathcal{H}\longrightarrow\mathcal{H}$, is defined by
	\begin{eqnarray}
		\label{cha:eq2}
		\pr_fx=\displaystyle\arg\min_{y\in\mathcal{H}} \hspace{.1cm}\left\lbrace f(y)+\mathfrak q(y-x) \right\rbrace ,
	\end{eqnarray}
\end{defn}
\noindent where $\mathfrak q(\cdot)=\frac{1}{2}\Vert\cdot\Vert^2$. This mapping was first introduced by Moreau \cite{ro1}, and has been widely used in optimization algorithms involving signal recovery and signal processing \cite{app1, app11}, deep neural network \cite{compes2},  stochastic control and large-data sets \cite{app6}, matrix decomposition \cite{app2}, image recovery \cite{app3}, equilibrium problems \cite{app4, app7}, and depth map estimation \cite{app10}. Interestingly,
\begin{equation*}
	\pr_{\iota_C}x= \arg\min_{y\in \mathcal H} \left\{\iota_C(y) + \mathfrak q(y-x)\right\} = \arg\min_{y\in C} \mathfrak q(y-x)= \arg\min_{y\in C} \Vert y-x\Vert=\p_Cx
\end{equation*}
which consequently shows that proximity operator is a natural extension of projection operator. Here, we call $\pr_fx$ as the {\it  proximal point} of $x$ relative to $f$, which is a generalization of the notion of a best approximation proposed by Moreau \cite{more}. This motivates us to extend system \eqref{systemconvex} to a framework involving proximity operators. In addition, one of the main objectives of this study is to extend the inverse best approximation property (IBAP) for linear subspaces to the framework of proximity operators. We refer to this generalization as the {\it inverse proximal property} (IPP), which will be formally introduced in Section \ref{sec3}.

Before we formulate our main problem, we introduce a set-valued operator called \emph{subdifferential} of $f$, defined as
$$\partial f(x)=\left\lbrace u\in\mathcal{H} \ | \ \forall y\in\mathcal{H}, \langle y-x, u\rangle +f(x)\leq f(y)\right\rbrace.$$
It is a well-known fact that $\partial f$ is maximally monotone \cite[Theorem 21.2]{ba}. In addition, for $f\in\Gamma_0(\mathcal{H})$, the following characterization provides a remarkable relationship between the proximity operator and the subdifferential \cite[Proposition 16.44]{ba},
\begin{equation}
	\label{proxsubd}
	(\forall x\in\mathcal{H})\ (\forall p\in\mathcal{H})\ \pr_f x=p \Leftrightarrow x-p\in\partial f(p).
\end{equation}
The characterization demonstrated in \eqref{proxsubd} is a generalization of the projection theorem \cite[Theorem 3.16]{ba}. For a fix $p\in\HH$ and for $f\in\Gamma_0(\HH)$, the equivalence statement \eqref{proxsubd} guarantees the existence of $x\in\HH$ in our equation whenever $p\in\dom\,\partial f$. With these preliminaries in place, we are now in a position to state the main problem of this study.\\

{\bf Main Problem:}
For each $i\in\mathbb{I}$, let $f_i\in\Gamma_0(\mathcal{H})$ and $p_i\in\mathrm{dom\ }\partial f_i$. Find a vector $x\in\mathcal{H}$ such that 
\begin{equation}
	\label{main_prob}
	\hspace{2in}	(\forall i\in \mathbb{I}) \ \pr_{f_i}x=p_i
\end{equation}
To motivate our analysis, we introduce the \emph{support function} of a set $C\subseteq\mathcal H$, defined by
$\sigma_C(x)=\sup_{c\in C}\langle c,x\rangle.$
According to \cite[Corollary 13.38 and Example 13.43(i)]{ba}, the support function $\sigma_C$ belongs to $\Gamma_0(\mathcal H)$ precisely when $C$ is nonempty, closed, and convex. The following example provides a simple instance of the Main Problem.
\begin{exm}\rm
	\label{main_example}
	Take $\mathcal{H}=\mathbb{R}^2$ and consider the closed convex set $C=[0,1]\times\{0\}$. Find an $x\in\mathcal{H}$ satisfying the system of nonlinear equations
	\begin{equation}
		\label{examsystem}
		\pr_{\iota_C}x=(1,0)\ \mathrm{and}\ \pr_{\sigma_C} x=(1,1).
	\end{equation}
	This problem can be re-posed as finding $x\in\mathcal{H}$ such that $\p_Cx=(1,0)$ and $\pr_{\sigma_C} x=(1,1)$. Let us show that $x=(2,1)$ is a solution to \eqref{examsystem}. For any $x=(x_1,x_2)\in\mathcal{H}$,
	$$\sigma_C(x)=\begin{cases}
		0, \hspace{.55in} \text{if} \ \ x_1< 0\\
		x_1, \hspace{.47in} \text{if} \ \ x_1\geq 0.
	\end{cases}$$
	By the equivalence \eqref{proxsubd}, $\p_Cx=(1,0)$ if and only if $\langle (x_1,x_2)-(1,0),(c,0)-(1,0)\rangle\leq 0$, for all $c\in [0,1]$. But $\langle(x_1-1,x_2),(c-1,0)\rangle\leq 0$, for all $c\in [0,1]$. Thus, $\p_Cx=(1,0)$ if and only if $x_1-1\geq 0$. With this observation, we must have $\p_C(2,1)=(1,0).$
	Meanwhile,
	\begin{center}
		$\sigma_C(1,1)=1=\langle (1,0), (1,1)\rangle$, and for any $y\in\mathcal{H}$, $\sigma_C(y)\geq\langle (1,0), y\rangle$. 
	\end{center}
	Consequently, for any $y\in\mathcal{H}$,
	\begin{align*}
		\sigma_C(1,1)+\dfrac{1}{2}\Vert (1,0)\Vert^2& \leq \langle(1,0),(1,1)\rangle+\dfrac{1}{2}\Vert (1,0)\Vert^2+\sigma_C(y)-\langle (1,0),y\rangle+\dfrac{1}{2}\Vert (1,1)-y\Vert^2\\
		&\leq\sigma_C(y)+\dfrac{1}{2}\Vert (1,0)+( (1,1)-y)\Vert^2.
	\end{align*}
	This further implies that $\sigma_C(1,1)+\mathfrak q\left( (2,1)-(1,1)\right)\leq\sigma_C(y)+\mathfrak q\left((2,1)-y\right)$. In turn, $\pr_{\sigma_C}(2,1)=(1,1)$, and we have shown that
	\begin{center}
		$\p_C(2,1)=(1,0)$ and $\pr_{\sigma_C}(2,1)=(1,1)$.
	\end{center}
\end{exm}

Due to \eqref{proxsubd}, a particular instance of the compatibility problem arises from a system involving proximity operators. Since $\pr_{f_i}=(I+\partial f_i)^{-1}$ \cite[Eqn.(16.38)]{ba}, the system \eqref{main_prob} is equivalent to finding $x\in\HH$ such that
`\begin{center}
	$x-p_i\in\partial f_i(p_i),\ i\in\II,$ that is, $0\in \partial f_i(p_i)+p_i-x,\ i\in\II.$
\end{center}
Defining the shifted operators, $A_i(x):=\partial f_i(p_i)+p_i-x$, which are maximally monotone operators, the Main Problem reduces to finding a point $x\in\HH$ such that
$$
0\in \bigcap_{i\in\II} A_i(x).
$$
Thus, the existence of a solution to the system \eqref{main_prob} can be interpreted as a compatibility problem for a family of maximally monotone operators. Despite its close connections to monotone operator theory and convex optimization, this problem remains relatively underexplored. The present work aims to contribute to filling this gap. The investigation of systems involving proximity operators is particularly significant because such systems capture the interplay among multiple convex functions. They naturally arise in various settings, including decomposition algorithms, multi-block optimization frameworks, and consensus-based models \cite{kiwi, zeng, hu}. Consequently, understanding the existence, characterization, and approximation of compatible proximal points can provide valuable insights into both the theoretical foundations and computational methodologies of convex optimization. Furthermore, compatibility conditions for proximity operators often uncover underlying geometric and variational structures of the associated convex functions, thereby deepening our understanding of their collective behavior.

In 2021, Combettes and Woodstock designed an algorithm that would find the solution (assuming that it exists) of the variational formulation 
\begin{equation} {\rm minimize } \ \|x-x_0\| \ \text{ subject to } \ x\in \bigcap_{j\in J} C_j \ \text{ and } \ (\forall k\in K) \ \pr_{f_k}x=p_k, 
	\label{comwoodeq}
\end{equation}
where $C_j, j\in J,$ are closed convex subsets of $\mathcal H$ with prescribed proximal points $(p_k)_{k\in K}$ of $x$ relative to $(f_k)_{k\in K}$ in $\Gamma_0(\mathcal H)$ \cite[Problem 1.1]{comwood}. The system of constraints in \eqref{comwoodeq} is a particular instance of \eqref{main_prob}; this connection will be established in Problem \ref{specialwood}. In the same year, the authors examined the system \eqref{main_prob} in a more general framework by replacing proximity operators with firmly nonexpansive operators to address a signal recovery task \cite{comwood2}. They reformulated the problem as a fixed-point equation and solved it using a block-iterative algorithm. The following year, a relaxed version of the original problem was proposed in the form of a variational inequality \cite{comwood3}, where the block-iterative method was again employed, activating only selected blocks of the underlying firmly nonexpansive operators at each iteration. In this study, we investigate the existence and uniqueness of solutions to the system \eqref{main_prob} from an analytical standpoint, focusing on theoretical foundations rather than algorithmic techniques. Finally, in 2023, Roble and Vallejo considered the system \eqref{systemconvex} for a finite collection of nonempty closed convex sets. They provided conditions for the system to admit a solution \cite[Proposition 1.2]{robval} and present a characterization of when a pair of nonempty closed convex sets $(C,D)$ has the IBAP \cite[Section 3]{robval}. We shall see in this study that the IBAP admits a natural extension to the IPP.

We summarize this paper as follows. In Section \ref{sec2}, we provide a characterization for the existence and uniqueness of solutions to the system \eqref{main_prob}. Furthermore, we investigate the existence of approximate solutions to the system \eqref{main_prob}, and present the necessary and sufficient conditions for the existence of approximate solutions. In Section \ref{sec3}, we introduce the IPP, showing that it naturally extends the IBAP to the setting of proximity operators. We also present several consequences and implications of this property. Finally, in Section \ref{sec4}, the results obtained will be applied to feasibility problem and signal recovery problem.
	
	\section{Consistency of the System of Proximity Operators}
\label{sec2}

Assume $(f_i)_{i\in\II}$ to be a finite collection of functions in $\Gamma_0(\HH)$, and $(p_i)_{i\in\II}$ are the proximal points of $x$ relative to $(f_i)_{i\in\II}$. Given a set $A$ and a point $p\in\HH$, the {\it translation} of $A$ by $p$ is defined as $p+A:=\{p+a\ \vert\ a\in A\}$. As an immediate consequence of \eqref{proxsubd}, the first proposition characterizes the existence of solutions to the system \eqref{main_prob} in terms of the consistency of the translations of subdifferentials induced by proximal points.
\begin{prop}
	\label{exist}
	Let $(p_i)_{i\in\II}\in\bigtimes_{i\in\II}\dom\,\partial f_i$. For any $x\in\HH$,
	\begin{center}
		$(\forall i\in \mathbb{I}) \ \pr_{f_i}x=p_i\Leftrightarrow x\in \bigcap_{i\in \mathbb{I}} \left( p_i + \partial f_i(p_i) \right).$
	\end{center}
\end{prop}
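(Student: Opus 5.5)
The plan is to apply the characterization \eqref{proxsubd} one index at a time and then intersect over $\II$. Since \eqref{proxsubd} holds for every $f\in\Gamma_0(\HH)$ and every pair $x,p\in\HH$, it can be used separately for each $f_i$ with $p=p_i$.

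\textbf{Single index.} Fix $i\in\II$. By \eqref{proxsubd}, $\pr_{f_i}x=p_i$ holds exactly when $x-p_i\in\partial f_i(p_i)$. I then rewrite this membership in translated form: $x-p_i\in\partial f_i(p_i)$ if and only if $x=p_i+u$ for some $u\in\partial f_i(p_i)$. By the definition of translation given just before the proposition, this says precisely that $x\in p_i+\partial f_i(p_i)$.

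\textbf{All indices.} The condition ``$\pr_{f_i}x=p_i$ for all $i\in\II$'' is the conjunction over $i$ of the single-index statements. Each of these is equivalent to $x\in p_i+\partial f_i(p_i)$. A conjunction of memberships over $i\in\II$ is the same as membership in $\bigcap_{i\in\II}\bigl(p_i+\partial f_i(p_i)\bigr)$, which gives both directions of the proposition at once.

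\textbf{Role of the hypothesis.} The assumption $p_i\in\dom\,\partial f_i$ is not needed for the logical equivalence. It only guarantees that each translate $p_i+\partial f_i(p_i)$ is nonempty, so that the statement is non-vacuous. If $\partial f_i(p_i)=\varnothing$, then both sides simply fail for every $x$.

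There is no real obstacle here. The only point requiring care is that \eqref{proxsubd} is quantified over all $x$ and $p$, which justifies applying it for each fixed $i$ with $p=p_i$.
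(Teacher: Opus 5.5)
Your proposal is correct and follows the same route as the paper, which presents this proposition as an immediate consequence of \eqref{proxsubd}: apply that equivalence for each $i\in\II$ with $p=p_i$, rewrite $x-p_i\in\partial f_i(p_i)$ as $x\in p_i+\partial f_i(p_i)$, and intersect over $\II$. Your observation that the hypothesis $p_i\in\dom\,\partial f_i$ is not needed for the equivalence itself is also accurate.
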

In line with \eqref{proxsubd}, the uniqueness of solution to the equation $\pr_fx=p$ is guaranteed whenever $\partial f(p)$ is a singleton set. We shall see in the next result that G\^ateaux differentiability is a sufficient condition for the uniqueness of solution to the system \eqref{main_prob}. This can be hinged on \cite[Proposition 17.31(i)]{co}.
\begin{prop}
	\label{Gatuni}
	Let $(p_i)_{i\in\II}\in\bigtimes_{i\in \mathbb{I}}\dom\,\partial f_i$. Suppose that for some $k\in \mathbb{I}$, $f_k$ is G\^ateaux differentiable at $p_k$. Then the system \eqref{main_prob} admits at most one solution in $\mathcal{H}$. Moreover, if there exists a solution $x\in \mathcal{H}$ to \eqref{main_prob}, then $x=p_k+ \nabla f_k(p_k)$.
\end{prop}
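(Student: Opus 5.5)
The plan is to reduce the statement to Proposition~\ref{exist} together with the standard fact that, for a function in $\Gamma_0(\HH)$, G\^ateaux differentiability at a point forces the subdifferential there to be the singleton consisting of the gradient, i.e.\ $\partial f_k(p_k)=\{\nabla f_k(p_k)\}$ (this is the content of \cite[Proposition 17.31(i)]{ba}, cited before the statement). Once this is in hand, the $k$-th set in the intersection of Proposition~\ref{exist} is a single point, and both claims follow at once.

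The steps, in order, are as follows. First, recall from Proposition~\ref{exist} that $x$ solves \eqref{main_prob} if and only if $x\in\bigcap_{i\in\II}\left(p_i+\partial f_i(p_i)\right)$. Second, use the G\^ateaux differentiability of $f_k$ at $p_k$ to write
\[
p_k+\partial f_k(p_k)=\{p_k+\nabla f_k(p_k)\}.
\]
Third, since an intersection is contained in each of its members, the solution set is contained in the singleton $\{p_k+\nabla f_k(p_k)\}$. Hence it has at most one element, and any solution must equal $p_k+\nabla f_k(p_k)$.

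The only substantive point is the singleton identity in the second step. If I do not simply cite it, I would prove both inclusions directly. For $\nabla f_k(p_k)\in\partial f_k(p_k)$: convexity makes the difference quotient $t\mapsto \bigl(f_k(p_k+t(y-p_k))-f_k(p_k)\bigr)/t$ nondecreasing, so its limit $\langle \nabla f_k(p_k),y-p_k\rangle$ as $t\downarrow 0$ is bounded above by its value at $t=1$, namely $f_k(y)-f_k(p_k)$. For the reverse inclusion, take $u\in\partial f_k(p_k)$ and $h\in\HH$. Then
\[
\langle u,h\rangle\le \frac{f_k(p_k+th)-f_k(p_k)}{t}\quad\text{for all } t>0,
\]
and letting $t\downarrow 0$ gives $\langle u,h\rangle\le\langle\nabla f_k(p_k),h\rangle$. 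Replacing $h$ by $-h$ gives the opposite inequality, so $u=\nabla f_k(p_k)$. This second inclusion is where I expect the (mild) obstacle to lie, since it needs G\^ateaux differentiability in every direction. It also uses that $p_k$ lies in the interior region where the directional derivative is linear, which is part of the definition of G\^ateaux differentiability at $p_k$.
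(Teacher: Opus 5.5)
Your proof is correct and takes the same approach as the paper. Both combine Proposition~\ref{exist} with the identity $\partial f_k(p_k)=\{\nabla f_k(p_k)\}$, so the solution set lies inside the singleton $\{p_k+\nabla f_k(p_k)\}$. Your self-contained derivation of that identity is also sound, and in fact only the inclusion $\partial f_k(p_k)\subseteq\{\nabla f_k(p_k)\}$ is needed for both conclusions.
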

\begin{proof}
	In view of Proposition \ref{exist}, the assumption that $f_k$ is G\^ateaux differentiable at $p_k$, the set $$S=\bigcap_{i\in \mathbb{I}\setminus \{k\}} \left( p_i + \partial f_i (p_i)\right) \cap \{p_k+ \nabla f_k(p_k)\}$$ of solutions to \eqref{main_prob} contains at most one element. In case $S\neq \varnothing$, then $S$ contains only the element $p_k+ \nabla f_k(p_k)$.   
\end{proof}

As a corollary to the Proposition \ref{Gatuni}, we provide a characterization for the uniqueness of solutions to the system involving two proximity operators.
\begin{cor}
	\label{exampleuni}
	Let $f,g\in\Gamma_0(\mathcal{H})$ and $(p,q)\in\dom\,\partial f\times\dom\,\partial g$. Suppose $f$ and $g$ are G\^ateaux differentiable at $p$ and $q$, respectively. Then the following are equivalent:
	\begin{enumerate}[{\rm (i)}]
		\item There exists a unique $x\in \HH$ such that $\pr_fx=p$ and $\pr_gx=q$.
		\item $p-q+\nabla f(p)-\nabla g(q)=0$.
	\end{enumerate}
\end{cor}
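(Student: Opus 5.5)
The plan is to reduce the statement to Proposition \ref{exist} together with the fact that Gâteaux differentiability collapses the subdifferential to a singleton, i.e. $\partial f(p)=\{\nabla f(p)\}$ and $\partial g(q)=\{\nabla g(q)\}$ (as in \cite[Proposition 17.31(i)]{ba}, the fact already used in Proposition \ref{Gatuni}). Applying Proposition \ref{exist} with $\II=\{1,2\}$, $f_1=f$, $f_2=g$, $p_1=p$, $p_2=q$, the set of solutions of the system $\pr_fx=p$, $\pr_gx=q$ is
\begin{equation*}
S=\bigl(p+\partial f(p)\bigr)\cap\bigl(q+\partial g(q)\bigr)=\{p+\nabla f(p)\}\cap\{q+\nabla g(q)\}.
\end{equation*}
So everything comes down to describing the intersection of two singletons.

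For (i)$\Rightarrow$(ii), I will suppose a unique solution $x$ exists. Then $x\in S$, which forces $x=p+\nabla f(p)$ and $x=q+\nabla g(q)$. Subtracting the two expressions gives $p-q+\nabla f(p)-\nabla g(q)=0$. This is essentially the "moreover" part of Proposition \ref{Gatuni}, applied once with $k=1$ and once with $k=2$.

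For (ii)$\Rightarrow$(i), the identity in (ii) says exactly that $p+\nabla f(p)=q+\nabla g(q)$. Hence the two singletons coincide and $S=\{p+\nabla f(p)\}$ is nonempty. Existence then follows from Proposition \ref{exist}, and uniqueness follows from Proposition \ref{Gatuni}, or directly from $S$ being a singleton.

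There is no genuine obstacle here. The only point needing care is justifying $\partial f(p)=\{\nabla f(p)\}$ when $f\in\Gamma_0(\HH)$ is Gâteaux differentiable at $p$. I will cite this standard fact from \cite{ba} rather than reprove it, since the paper already relies on it in Proposition \ref{Gatuni}.
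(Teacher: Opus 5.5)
Your proposal is correct and takes essentially the same route as the paper. Both use Proposition \ref{exist} (the paper through Proposition \ref{Gatuni}) together with $\partial f(p)=\{\nabla f(p)\}$ and $\partial g(q)=\{\nabla g(q)\}$, which reduces the solution set to $\{p+\nabla f(p)\}\cap\{q+\nabla g(q)\}$; this intersection is nonempty exactly when (ii) holds, and it has at most one element, which gives uniqueness.
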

\begin{proof}
	Assume $f$ and $g$ are G\^ateaux differentiable at $p$ and $q$, respectively.
	
	(i)$\Leftrightarrow$(ii): Using Proposition \ref{Gatuni}, there exists an $x\in\mathcal{H}$ such that $x\in(p+\nabla f(p))\cap (q+\nabla g(q))$ if and only if $p+\nabla f(p)=x=q+\nabla g(q).$
	For the proof of uniqueness, let $y\in\mathcal{H}$ with the property that $\pr_fy=p \ \ \text{and} \ \ \pr_gy=q.$
	It follows that $y=p+\nabla f(p)=x.$
\end{proof}

A generalization of the classical orthogonal decomposition is  \emph{Moreau's decomposition} \cite[Remark 14.4]{ba}, which has also been extended to Banach spaces \cite{corey}. Specifically, for any $f\in\Gamma_0(\mathcal{H})$, and for every $x\in\HH$, $$x=\pr_fx+\pr_{f^\ast}x,$$
where $f^\ast$ denotes the Fenchel conjugate of $f$ (see \cite[Definition 13.1]{ba} for more details). A useful instance of this transform is the identity, $\iota^\ast_C=\sigma_C$, which we will employ frequently \cite[Example 13.3 (i)]{ba}. Furthermore, by \cite[Example 13.43 (i)]{ba}, for any nonempty, closed, and convex set $C$, its support function satisfies $\sigma^\ast_C=\iota_C$. 
\begin{rmk}
	\label{fenchelsubd}
	Let $f\in\Gamma_0(\mathcal{H})$. For any $x\in\HH$, $\pr_{f^\ast}x\in\partial f\left(\pr_fx\right)$.
\end{rmk}
\begin{proof}
	Let us denote $s:=\pr_{f^\ast}x$. By Moreau's decomposition, $s=x-\pr_fx$, that is, $x
	-s=\pr_fx$. With the relationship described in \eqref{proxsubd},
	\begin{center}
		$s=x-(x-s)\in\partial f(x-s)=\partial f\left(\pr_fx\right).$
	\end{center}    
\end{proof}

In the case that our Main Problem admits a solution to a system involving proximity operator of functions and their respective Fenchel conjugates, the Proposition \ref{Fenchuni} provides another sufficient condition that guarantees the uniqueness of the solution to the system \eqref{main_prob}.

\begin{prop}
	\label{Fenchuni}
	Let $(p_i)_{i\in\II}\in\bigtimes_{i\in \mathbb{I}}\dom\,\partial f_i$. If $f_{j}=f^\ast_k$, for some distinct indices $j,k\in\II$, then the system \eqref{main_prob}
	admits at most one solution in $\mathcal{H}$. In case there is a solution $x\in \mathcal{H}$ to \eqref{main_prob}, then $x=p_j+p_k$.
\end{prop}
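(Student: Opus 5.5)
The plan is to show directly that any solution must equal $p_j+p_k$. Uniqueness is then immediate, because the candidate depends only on the prescribed data.

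Suppose $x\in\HH$ solves \eqref{main_prob}, so $\pr_{f_i}x=p_i$ for all $i\in\II$. In particular $\pr_{f_k}x=p_k$ and $\pr_{f_j}x=\pr_{f_k^\ast}x=p_j$. Since $f_k\in\Gamma_0(\HH)$, Moreau's decomposition applies and gives
\begin{equation*}
x=\pr_{f_k}x+\pr_{f_k^\ast}x=p_k+p_j .
\end{equation*}
Hence every solution coincides with $p_j+p_k$. If $x$ and $y$ are two solutions, then $x=p_j+p_k=y$, so the system admits at most one solution.

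As an optional consistency check, the same conclusion can be phrased through Proposition \ref{exist}. By \eqref{proxsubd}, $x-p_k\in\partial f_k(p_k)$, and by Remark \ref{fenchelsubd}, $\pr_{f_k^\ast}x=p_j$ belongs to $\partial f_k(p_k)$. This agrees with $x-p_k=p_j$ but is not needed for the argument.

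I expect no real obstacle. The only point to verify is that Moreau's decomposition is applicable. This holds because $f_k\in\Gamma_0(\HH)$ implies $f_k^\ast\in\Gamma_0(\HH)$, so $f_j=f_k^\ast$ is a legitimate member of the family. Distinctness of $j$ and $k$ is used only so that the two equations are genuinely separate constraints of the system.
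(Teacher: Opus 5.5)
Your proof is correct and follows the paper's argument: both read off $x=p_j+p_k$ from Moreau's decomposition, and both get uniqueness because this candidate depends only on the data. The only cosmetic difference is that you decompose with respect to $f_k$ directly, whereas the paper decomposes with respect to $f_j$ and so needs the biconjugation identity $f_j^\ast=f_k^{\ast\ast}=f_k$.
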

\begin{proof}
	Let $f_j=f^\ast_k$, for some $j,k\in\II$ with $j\neq k$. Assume \eqref{main_prob} admits a solution $x\in\HH$. Since $f_j,f_k\in\Gamma_0(\HH)$, $f^\ast_j=f_k$. By Moreau's decomposition, $$x=\pr_{f_j}x+\pr_{f^\ast_j}x=\pr_{f_j}x+\pr_{f_k}x=p_j+p_k.$$
	Now, if \eqref{main_prob} admits another solution $y\in\HH$ then by Moreau's decomposition, $$y=\pr_{f_j}y+\pr_{f^\ast_j}y=\pr_{f_j}y+\pr_{f_k}y=p_j+p_k=x.$$
	Thus, if a solution to \eqref{main_prob} exist then it must be unique. Combining with the Proposition \ref{exist}, the system described in \eqref{main_prob} admits at most one solution.    
\end{proof}

As a remark, we take note that $\iota_C$ is not G\^ateaux differentiable at every boundary points in $C$. In light with the Example \ref{main_example}, $\iota_C$ is not G\^ateaux differentiable at $(1,0)$. However, Proposition \ref{Fenchuni} shows that the system given by $\pr_{\iota_C}x=(1,0)\ \mathrm{and}\ \pr_{\sigma_C} x=(1,1)$ admits a unique solution. Combined with Corollary \ref{exampleuni}, this observation indicates that the uniqueness of the solution is not sufficient to guarantee the Gâteaux differentiability of each function involved in the system.

As suggested by Proposition \ref{exist}, determining a point $x\in \bigcap_{i\in \mathbb{I}} \left( p_i + \partial f_i(p_i) \right)$ may be challenging. This leads naturally to a convex feasibility problem, which forms the focus of Section \ref{secfeas}. Our first theorem will address this by establishing necessary and sufficient conditions for the existence of solutions to the system \eqref{main_prob}. To this end, we recall a generalized form of Fenchel-Rockafellar duality, where the underlying assumption involves the interior ($\mathrm{int}$) of a set. We refer the reader to \cite[Eqn. (6.6)]{ba} for the definition of this notion of interior.

\begin{lem}{\rm \cite[{Eq. (1.1)}]{li}} \label{fenchrockthm} For each $i\in \mathbb{I}$, let $g_i\in \Gamma_0(\mathcal{H})$. Suppose that 
	\begin{equation}\label{condition}
		(\exists k\in \mathbb{I}) \ \mathrm{dom}\,g_k  \  \cap  \ \mathrm{int}\,\left( \bigcap_{i\in \mathbb{I}\setminus\{k\}} \mathrm{dom}\,g_i\right) \neq \varnothing.
	\end{equation}
	Then 
	\begin{equation}
		\label{fenchrock}
		\inf_{u\in \mathcal{H}} \left( \sum_{i\in\mathbb{I}} g_i(u)\right)  = -\min_{\{x_i\}_{i\in \mathbb{I}} \subset \mathcal{H}} \left\{\sum_{i\in \mathbb{I}} g_i^* (x_i)\, \Bigg|  \, \sum_{i\in\mathbb{I}} x_i=0\right\}.
	\end{equation}
\end{lem}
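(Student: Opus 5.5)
The plan is to prove Lemma \ref{fenchrockthm} in two parts. First I establish weak duality, which needs no qualification condition. Then I use \eqref{condition} to get strong duality with dual attainment. For weak duality, take $u\in\HH$ and $(x_i)_{i\in\II}$ with $\sum_i x_i=0$. The Fenchel--Young inequality gives $g_i(u)+g_i^\ast(x_i)\geq\langle u,x_i\rangle$. Summing over $i\in\II$ and using $\sum_i x_i=0$ yields
\begin{equation*}
\sum_{i\in\II} g_i(u)\geq-\sum_{i\in\II} g_i^\ast(x_i).
\end{equation*}
Hence the left side of \eqref{fenchrock} is at least minus the infimum on the right. If the primal infimum equals $-\infty$, this forces every dual value to be $+\infty$, and \eqref{fenchrock} holds trivially, with the minimum attained by any feasible family. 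So I will assume the primal infimum $\mu$ is finite. Properness of the functions and \eqref{condition} rule out $\mu=+\infty$, since the common domain is nonempty.

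For strong duality I would reduce to the two-function Fenchel--Rockafellar theorem, or equivalently to the sum rule for conjugates. Write $\varphi=g_k$ and $\psi=\sum_{i\neq k} g_i$, so that $\psi\in\Gamma_0(\HH)$ and $\dom\psi=\bigcap_{i\neq k}\dom g_i$. Condition \eqref{condition} then reads $\dom\varphi\cap\mathrm{int}\,\dom\psi\neq\varnothing$. This is the standard interior-point qualification, and under it the classical result (\cite[Proposition 15.13/Theorem 15.23]{ba}) gives
\begin{equation*}
\inf(\varphi+\psi)=-\min_{y\in\HH}\bigl(\varphi^\ast(-y)+\psi^\ast(y)\bigr).
\end{equation*}
Next I must split $\psi^\ast(y)$ as an exact infimal convolution $\min\{\sum_{i\neq k} g_i^\ast(x_i)\mid \sum_{i\neq k}x_i=y\}$. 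Combining that splitting with $x_k=-y$ gives \eqref{fenchrock}, with the minimum attained.

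The hard step is this exactness $(\sum_{i\neq k}g_i)^\ast=\mathop{\Box}_{i\neq k}g_i^\ast$ with attainment. The hypothesis only says that the intersection $\bigcap_{i\neq k}\dom g_i$ has nonempty interior, not that each individual domain does. So an induction with the two-function rule at every step does not obviously apply, and in general the conjugate equals only the closure of the infimal convolution.

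My first attempt would avoid the splitting altogether by lifting to a product space. Set $\mathcal K=\HH^{m}$ with $m=|\II|$, let $G(u_1,\dots,u_m)=\sum_i g_i(u_i)$, and let $D=\{(u,\dots,u)\}$ be the diagonal. Then $\inf\sum g_i=\inf(G+\iota_D)$, and $G^\ast(x_1,\dots,x_m)=\sum_i g_i^\ast(x_i)$, while $\iota_D^\ast=\iota_{D^\perp}$ with $D^\perp=\{\sum x_i=0\}$. So \eqref{fenchrock} is exactly two-function Fenchel--Rockafellar for $G$ and $\iota_D$. The task is then to verify a qualification condition such as $0\in\mathrm{sri}(\dom G-D)$, or the Attouch--Brézis condition that $\mathrm{cone}(\dom G-D)$ is a closed subspace, from \eqref{condition}.

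To do that, pick $z\in\dom g_k\cap\mathrm{int}\bigcap_{i\neq k}\dom g_i$ with a ball $B(z,\varepsilon)$ inside the intersection. Given any $(v_i)\in\mathcal K$, I would write $t(v_i)=(z+w_i)_i-(d,\dots,d)$ with $d=z-tv_k$ and $w_k=0$, so that $w_i=t(v_i-v_k)$. For $t$ small, every $z+w_i$ with $i\neq k$ lies in the ball, hence in $\dom g_i$. This shows $\dom G-D$ is absorbing, so $\mathrm{cone}(\dom G-D)=\mathcal K$. That is the Attouch--Brézis condition, and it yields equality together with attainment of the dual minimum, which finishes the proof.
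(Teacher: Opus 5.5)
The paper gives no proof of this lemma. It is quoted from \cite{li} and then used as a black box in the proof of Theorem \ref{main_theorem}. Your argument is a correct, self-contained derivation.

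Lifting to $\mathcal{K}=\mathcal{H}^m$ is the right device. Since $G^\ast(x_1,\dots,x_m)=\sum_i g_i^\ast(x_i)$ and $\iota_D^\ast=\iota_{D^\perp}$ with $D^\perp=\{(x_i)_i \mid \sum_i x_i=0\}$, identity \eqref{fenchrock} is literally two-function Fenchel--Rockafellar duality for $G$ and $\iota_D$. Your decomposition $t(v_i)_i=\bigl(z+t(v_i-v_k)\bigr)_i-(z-tv_k,\dots,z-tv_k)$ shows that
$0\in\mathrm{core}(\bigtimes_i\mathrm{dom}\,g_i-D)\subseteq\mathrm{sri}(\bigtimes_i\mathrm{dom}\,g_i-D)$.
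That is the hypothesis of \cite[Theorem 15.23]{ba}, which gives both the equality and attainment of the dual minimum.

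Compared with the bare citation, your route makes visible that only this product-space qualification is needed, and it is weaker than \eqref{condition}. It also survives, with the ball replaced by the defining property of the core, if $\mathrm{int}$ is read as the algebraic interior.

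One correction to your commentary, though it does not affect the proof: the obstacle you raise against the iterated two-function approach is not real. Each $\mathrm{dom}\,g_i$ contains the intersection, so it certainly has nonempty interior. More precisely, for a finite family $\mathrm{int}\bigcap_{i\neq k}\mathrm{dom}\,g_i=\bigcap_{i\neq k}\mathrm{int}\,\mathrm{dom}\,g_i$. So your point $z$ lies in $\mathrm{dom}\,g_k$ and in $\mathrm{int}\,\mathrm{dom}\,g_i$ for every $i\neq k$. Consequently the exact sum rule for conjugates applies at every step of an induction over $i\neq k$, and $\psi^\ast$ is indeed the exact infimal convolution of the $g_i^\ast$, $i\neq k$. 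Both routes succeed; the product-space one is simply shorter.
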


We also need the concept of linear independence of sets. A family of subsets $\{A_i\}_{i\in \mathbb{I}}$ is \emph{linearly independent} if 
\begin{equation}
	\label{linind}
	\left(\forall (a_i)_{i\in\mathbb{I}} \in \bigtimes_{i\in \mathbb{I}} A_i\right) \ \sum_{i\in \mathbb{I}} a_i=0\ \text{implies}\ (\forall i\in \mathbb{I}) \ a_i=0.
\end{equation}
We are now to give a necessary and sufficient conditions for the existence of solutions to the system \eqref{main_prob}.

\begin{thm}
	\label{main_theorem}
	For each $i\in \mathbb{I}$, let 
	\begin{center}
		$f_i\in \Gamma_0(\mathcal{H})$, $p_i \in \mathrm{dom}\,\partial f_i$, $\alpha_i\in \mathbb{R}\setminus \{0\}$,
	\end{center}
	and set
	\begin{center}
		$A_i=\alpha_i\left( p_i + \partial f_i(p_i)\right)$, and $g_i = \sigma_{A_i}$.
	\end{center}
	Suppose that $\sum_{i\in \mathbb{I}} \alpha_i = 0$, and that the functions $\{g_i\}_{i\in \mathbb{I}}$ satisfy Condition \eqref{condition}. Consider the following statements.
	\begin{enumerate}
		\item[{\rm(i)}] There exists $x\in \mathcal{H}$ such that $(\forall i\in \mathbb{I}) \ \pr_{f_i}x =p_i$.
		\item[{\rm(ii)}] $\inf\limits_{u\in \mathcal{H}}\, \sum\limits_{i\in \mathbb{I}} \left( \langle u,\alpha_ip_i\rangle + \sigma_{\partial f_i(p_i)}(\alpha_iu) \right)= 0$.
	\end{enumerate}
	Then {\rm(i)}\,$\Rightarrow$\,{\rm (ii)}. Moreover, if we assume that there exists $x\in \mathcal{H}$ and a sequence $\{\varepsilon_i\}_{i\in \mathbb{I}}$ of nonzero real numbers such that
	\begin{center}
		$(\forall i\in \mathbb{I}) \ \varepsilon_i x \in A_i$ with $\sum_{i\in \mathbb{I}} \varepsilon_i =0$,
	\end{center}
	then {\rm(i)} and {\rm (ii)} hold provided that $(\forall i\in \mathbb{I}) \ \alpha_i = \varepsilon_i.$
\end{thm}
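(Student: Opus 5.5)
Observe first that for each $i$, $\sigma_{A_i}(u)=\sup_{a\in\partial f_i(p_i)}\langle \alpha_i(p_i+a),u\rangle=\langle u,\alpha_ip_i\rangle+\sigma_{\partial f_i(p_i)}(\alpha_iu)$. Hence the quantity in (ii) is exactly $\inf_{u\in\HH}\sum_{i\in\II} g_i(u)$. Every $g_i$ is a support function, so $g_i(0)=0$, and the infimum is always $\le 0$. Condition \eqref{condition} also gives $\bigcap_i\dom g_i\neq\varnothing$, so the sum is proper. Statement (ii) therefore reduces to the inequality $\sum_i g_i(u)\ge 0$ for all $u$.

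For (i)$\Rightarrow$(ii), take $x$ as in (i). By Proposition \ref{exist}, $x\in p_i+\partial f_i(p_i)$ for every $i$, so $\alpha_ix\in A_i$. Then $g_i(u)\ge\langle \alpha_ix,u\rangle$, and summing over $i$ gives
\[
\sum_i g_i(u)\ge\Big(\sum_i\alpha_i\Big)\langle x,u\rangle=0 .
\]
Together with the value at $u=0$, the infimum equals $0$. This direction needs neither Lemma \ref{fenchrockthm} nor condition \eqref{condition}.

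For the second part, assume $\varepsilon_ix\in A_i$ with $\sum_i\varepsilon_i=0$ and $\alpha_i=\varepsilon_i$. Then $\alpha_ix\in\alpha_i(p_i+\partial f_i(p_i))$, and dividing by $\alpha_i\neq 0$ gives $x\in p_i+\partial f_i(p_i)$ for all $i$. Proposition \ref{exist} then yields (i), and (ii) follows from the first part. So the hypothesis, as stated, makes the sufficiency trivial.

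I would still add a remark explaining where Lemma \ref{fenchrockthm} fits, since this is presumably its intended role. Applying \eqref{fenchrock} to the $g_i$ and using $g_i^*=\iota_{\overline{A_i}}$ (since $\sigma_{A_i}=\sigma_{\overline{\mathrm{conv}}A_i}$ and $\partial f_i(p_i)$ is closed and convex), (ii) becomes equivalent to the existence of $x_i\in A_i$ with $\sum_i x_i=0$. That is, there are $a_i\in p_i+\partial f_i(p_i)$ with $\sum_i\alpha_ia_i=0$. The main obstacle is passing from such a family $(a_i)$ to a common point $x$, meaning all $a_i$ equal. In general this fails without an extra assumption, and the stated $\varepsilon$-hypothesis supplies exactly this. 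I expect the written proof to go through this duality reformulation and then invoke the hypothesis. My direct argument above already suffices, so I would present it and mention the duality view only as a remark.
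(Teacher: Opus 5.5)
Your proof is correct, but for (i)$\Rightarrow$(ii) it takes a more elementary route than the paper.

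The paper first applies Lemma~\ref{fenchrockthm}, which is where Condition~\eqref{condition} enters. Together with $g_i^\ast=\iota_{A_i}$, this gives the identity $\inf_{u}\sum_i g_i(u)=-\iota_{\sum_i A_i}(0)$ in \eqref{indsum}. The paper then notes that a solution $x$ gives $\alpha_i x\in A_i$ with $\sum_i\alpha_i x=0$, i.e.\ $0\in\sum_i A_i$. For the second part it likewise obtains (ii) from \eqref{indsum}, and obtains (i) by dividing $\alpha_i x\in A_i$ by $\alpha_i$ and applying Proposition~\ref{exist}, as you do.

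You instead use only $\sigma_{A_i}(u)\ge\langle\alpha_i x,u\rangle$ summed over $i$, together with $g_i(0)=0$. This is weak duality. You are therefore right that neither the lemma nor the qualification condition is needed for anything the theorem actually asserts, and that the \emph{Moreover} part is immediate.

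What the paper's detour buys is the stronger reformulation in your closing remark. Under \eqref{condition}, (ii) is equivalent to $0\in\sum_i A_i$, i.e.\ to the existence of $a_i\in p_i+\partial f_i(p_i)$ with $\sum_i\alpha_i a_i=0$, which is the natural starting point for any converse. Your reading that passing from such a family to a common point is exactly what the $\varepsilon$-hypothesis supplies matches how the paper's proof is organized.
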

\begin{proof}
	For $u\in \mathcal{H}$ and for $i\in \mathbb{I}$,
	\begin{align*}
		\langle u,\alpha_ip_i\rangle + \sigma_{\partial f_i(p_i)}(\alpha_iu) =\langle u,\alpha_ip_i\rangle + \sigma_{\alpha_i\partial f_i(p_i)}(u)=\sigma_{\alpha_i(p_i+\partial f_i(p_i))}(u)=g_i(u).
	\end{align*}
	Making use of Lemma \ref{fenchrockthm} yields 
	\begin{align}
		\inf\limits_{u\in \mathcal{H}}\, \sum\limits_{i\in \mathbb{I}} \left( \langle u,\alpha_ip_i\rangle + \sigma_{\partial f_i(p_i)}(\alpha_iu) \right) 
		=\inf_{u\in \mathcal{H}}\left(\sum\limits_{i\in \mathbb{I}} g_i(u) \right)\nonumber = -\min_{ \{x_i\}_{i\in\mathbb{I}} \subset \mathcal{H}} \left\{ \sum_{i\in \mathbb{I}} g_i^*(x_i)\, \Bigg| \,\sum_{i\in \mathbb{I}} x_i=0\right\} \nonumber
	\end{align}
	Consequently, by the Fenchel conjugate,
	\begin{align}
		\inf\limits_{u\in \mathcal{H}}\, \sum\limits_{i\in \mathbb{I}} \left( \langle u,\alpha_ip_i\rangle + \sigma_{\partial f_i(p_i)}(\alpha_iu) \right)
		= -\min_{ \{x_i\}_{i\in\mathbb{I}} \subset \mathcal{H}} \left\{ \sum_{i\in \mathbb{I}} \iota_{A_i}(x_i)\, \Bigg| \, \sum_{i\in \mathbb{I}} x_i=0\right\} =-\iota_{\sum_{i\in\mathbb{I}} A_i} (0).\label{indsum}
	\end{align}
	
	Now, suppose that (i) is true. By Proposition \ref{exist}, we can find an element $x$ in $ \bigcap_{i\in \mathbb{I}} \alpha_i^{-1}A_i$. Equivalently, for each $i\in \mathbb{I}$, $\alpha_ix\in A_i$. Moreover, $\sum_{i\in \mathbb{I}} \alpha_i x =0.$ Therefore (ii) follows.
	
	To prove the next assertion, suppose that there exists $x\in \mathcal{H}$ such that \begin{equation}(\forall i\in \mathbb{I}) \ \alpha_i x \in A_i.\label{indepen}\end{equation}  In view of \eqref{indsum} and since $\sum_{i\in \mathbb{I}} \alpha_i x = 0$, we get (ii). Finally, from \eqref{indepen}, we conclude that 
	$$x\in \bigcap_{i\in \mathbb{I}} (p_i + \partial f_i (p_i)).$$
	Therefore, (i) follows from Proposition \ref{exist}.    
\end{proof}

The implication (i)$\Rightarrow$(ii) of Theorem \ref{main_theorem} is a generalization of the \cite[Collorary 2.3]{co}. As a consequence of Theorem \ref{main_theorem}, we characterize those functions, $f$ and $g$, that satisfy the consistency of the system described in \eqref{main_prob}.
\begin{cor}
	\label{lem1}
	Let $f,g\in\Gamma_0(\mathcal{H})$, and let $(p,q)\in\emph{dom}\,\partial f\times\emph{dom}\,\partial g$. Set
	\begin{center}
		$G= \emph{dom}(\sigma_{-\partial g(q)})$ and $F=\emph{dom}(\sigma_{\partial f(p)})$.
	\end{center}
	Suppose that $G\cap \mathrm{int}(F)\neq\varnothing$. Then the following are equivalent:
	\begin{enumerate}
		\item[{\rm(i)}] There exists $x\in \mathcal{H}$ such that $\emph{Prox}_fx=p$ and $\emph{Prox}_gx=q$.
		\item[{\rm(ii)}] $\displaystyle\inf_{u\in\mathcal{H}} \left(\langle u,p-q\rangle+\sigma_{\partial f(p)}(u)+\sigma_{\partial g(q)}(-u)\right)=0$.
	\end{enumerate}
\end{cor}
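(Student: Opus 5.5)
We apply Theorem \ref{main_theorem} with $\mathbb{I}=\{1,2\}$, $f_1=f$, $f_2=g$, $p_1=p$, $p_2=q$, $\alpha_1=1$ and $\alpha_2=-1$, so that $\sum\alpha_i=0$. Then $A_1=p+\partial f(p)$ and $A_2=-(q+\partial g(q))$, and
\[
g_1(u)=\langle u,p\rangle+\sigma_{\partial f(p)}(u),\qquad g_2(u)=-\langle u,q\rangle+\sigma_{\partial g(q)}(-u)=-\langle u,q\rangle+\sigma_{-\partial g(q)}(u).
\]
Since the linear parts are finite everywhere, $\mathrm{dom}\,g_1=F$ and $\mathrm{dom}\,g_2=G$. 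Hence Condition \eqref{condition} with $k=2$ is exactly the hypothesis $G\cap\mathrm{int}(F)\neq\varnothing$. Statement (ii) of Theorem \ref{main_theorem} then reads $\inf_u(g_1+g_2)(u)=0$, which is (ii) of the corollary. The implication (i)$\Rightarrow$(ii) therefore follows directly from Theorem \ref{main_theorem}.

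For (ii)$\Rightarrow$(i) the second half of Theorem \ref{main_theorem} does not apply, since it presupposes a point $x$, so I would go back to identity \eqref{indsum} from its proof. That identity gives
\[
\inf_u(g_1+g_2)(u)=-\iota_{A_1+A_2}(0),
\]
where the minimum over $\{x_1+x_2=0\}$ is attained by Lemma \ref{fenchrockthm}. Note that $g_i^*=\iota_{\overline{A_i}}$, and $A_i$ is closed and convex because $\partial f(p)$ and $\partial g(q)$ are closed and convex. So if the infimum is $0$, then $0\in A_1+A_2$. This means there exist $x_1\in p+\partial f(p)$ and $x_2\in -(q+\partial g(q))$ with $x_1+x_2=0$. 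Setting $x:=x_1=-x_2$ gives $x\in(p+\partial f(p))\cap(q+\partial g(q))$, and Proposition \ref{exist} yields (i).

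The main subtlety is in the duality step. We must justify that the dual minimum is attained, which is what Lemma \ref{fenchrockthm} asserts, and we must check that the conjugate of the support function is the indicator of $A_i$ itself and not only of its closure. The latter holds because subdifferentials are closed convex sets and $\alpha_i\neq 0$ preserves closedness. Once these points are checked, the equivalence follows: (ii) forces the dual optimal value to be $0$ rather than $-\infty$, and the attained minimizer supplies the solution $x$.
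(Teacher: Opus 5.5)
Your proof is correct and follows the paper's route. The paper's proof is the one-line specialization of Theorem \ref{main_theorem} with $A_1=p+\partial f(p)$ and $A_2=-(q+\partial g(q))$ (i.e.\ $\alpha_1=1$, $\alpha_2=-1$), exactly as you set it up, with Condition \eqref{condition} for $k=2$ being the hypothesis $G\cap\mathrm{int}(F)\neq\varnothing$. You are right that the theorem's ``moreover'' clause presupposes a solution and so does not by itself deliver (ii)$\Rightarrow$(i); reading that implication off identity \eqref{indsum} is precisely the step the paper's citation leaves implicit. One small correction: attainment of the dual minimum is automatic here, because the dual objective takes only the values $0$ and $+\infty$. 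What Lemma \ref{fenchrockthm} really contributes is the absence of a duality gap, without which (ii) would only give $0\in\overline{A_1+A_2}$.
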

\begin{proof}
	(i)$\Leftrightarrow$(ii): Let $A_1=p+\partial f(p)$ and $A_2=-\left(q+\partial g(q)\right)$. We conclude the proof by invoking Theorem \ref{main_theorem}.    
\end{proof}

In view of Corollary \ref{lem1} and \cite[Proposition 6.19]{ba}, the assumption $G\cap \mathrm{int}(F)\neq\varnothing$ can be replaced by any of the following: $G-F$ is a closed linear subspace or $0\in \mathrm{int}\,(G-F)$ or $0\in\mathrm{sri}(G-F)$ or $F\cap \mathrm{int}(G)\neq\varnothing$, where $G-F:=\{g-f\ \vert\ g\in G, f\in F\}$ and "$\mathrm{sri}$" stands for {\it strong relative interior} (see \cite[Definition 6.9]{ba}).

Given two convex sets, $C$ and $D$, let us investigate the solution of our system for a fixed $(p,q)\in C\times D$ using Corollary \ref{lem1}. The next two lemmas give us insight how tangent cones characterize the existence of solution to our system of projections onto convex sets.
\begin{lem}
	\label{lemsup} Let $A$ and $B$ be two nonempty sets. Then $\sigma_{A+B}= \sigma_A+\sigma_B.$
\end{lem}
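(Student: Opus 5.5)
The plan is to verify the identity pointwise at an arbitrary $x\in\HH$, working straight from the definition $\sigma_C(x)=\sup_{c\in C}\langle c,x\rangle$, with values in $(-\infty,+\infty]$. We fix $x$ and write $A+B=\{a+b\ \vert\ a\in A,\ b\in B\}$. By bilinearity of the inner product, for every $a\in A$ and $b\in B$ we have $\langle a+b,x\rangle=\langle a,x\rangle+\langle b,x\rangle$. The identity then reduces to the elementary fact that the supremum of a sum of two independent families equals the sum of their suprema.

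For the inequality $\sigma_{A+B}(x)\leq\sigma_A(x)+\sigma_B(x)$, we use $\langle a,x\rangle\leq\sigma_A(x)$ and $\langle b,x\rangle\leq\sigma_B(x)$ for each $a\in A$ and $b\in B$. Adding these gives $\langle a+b,x\rangle\leq\sigma_A(x)+\sigma_B(x)$. Since every element of $A+B$ has this form, taking the supremum over $A+B$ yields the bound. The right-hand side is never of the indeterminate form $\infty-\infty$, because $A,B\neq\varnothing$ gives $\sigma_A(x),\sigma_B(x)>-\infty$.

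For the reverse inequality, we fix $a\in A$ and note that $\langle a,x\rangle+\langle b,x\rangle\leq\sigma_{A+B}(x)$ for every $b\in B$. Taking the supremum over $b$ gives $\langle a,x\rangle+\sigma_B(x)\leq\sigma_{A+B}(x)$. Then taking the supremum over $a\in A$ gives $\sigma_A(x)+\sigma_B(x)\leq\sigma_{A+B}(x)$.

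The only delicate point is the arithmetic in $(-\infty,+\infty]$, in particular the step where a finite quantity $\langle a,x\rangle$ is added to a supremum that may equal $+\infty$. We handle it as follows. If $\sigma_B(x)=+\infty$, the previous step already forces $\sigma_{A+B}(x)=+\infty$, and the symmetric argument covers $\sigma_A(x)=+\infty$. Otherwise all quantities are finite and the sup manipulations are routine. Nonemptiness of $A$ and $B$ is what guarantees that no value $-\infty$ appears anywhere.
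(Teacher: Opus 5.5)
Your proof is correct and follows essentially the same route as the paper: the upper bound comes from bounding each summand by its supremum, and the lower bound from fixing one element and taking suprema in turn. The paper fixes $b\in B$ and phrases this through $\sigma_{A+b}=\sigma_A+\langle b,\cdot\rangle$ and $A+b\subseteq A+B$, whereas you fix $a\in A$; your explicit check that no $-\infty$ or $\infty-\infty$ arises is a point the paper leaves implicit.
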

\begin{proof}
	For each $u\in\mathcal{H}$,
	\begin{center}
		$\sigma_{A+B}(u)=\displaystyle\sup_{y=y_A+y_B\in A+B}\langle y,u\rangle\leq\displaystyle\sup_{y_A\in A}\,\langle y_A,u\rangle+\sup_{y_B\in B}\,\langle y_B,u\rangle= \sigma_A(u)+\sigma_B(u)$.
	\end{center}
	Let us show the other inequality. Fix $b\in B$. For each $u\in\mathcal{H}$,
	\begin{center}
		$\sigma_{A+b}(u)=\displaystyle\sup_{y_A\in A}\,\langle y_A+b,u\rangle=\displaystyle\sup_{y_A\in A}\,\langle y_A,u\rangle+\langle b,u\rangle= \sigma_A(u)+\langle b,u\rangle$.
	\end{center}
	Consequently, $\sup_{b\in B} \sigma_{A+b}(u)=\sup_{b\in B}(\sigma_A(u)+\langle b,u\rangle)=\sigma_A(u)+\sigma_B(u).$ But $A+b\subseteq A+B$ implies $\sigma_{A+b}\leq\sigma_{A+B}$. We then obtain the inequalities  $\sup_{b\in B}\sigma_{A+b}\leq\sigma_{A+B}$, and
	\begin{center}
		$\sigma_{A+B}(u)\geq\sup_{b\in B}\sigma_{A+b}(u)=\sigma_A(u)+\sigma_B(u)$, for each $u\in\mathcal{H}$.
	\end{center}
\end{proof}

In the local analysis of a convex set $C$, the normal cone operator $\N_C$ and the tangent cone operator $\T_C$ play fundamental roles; see \cite[Definition 6.38]{ba}. In particular, \cite[Proposition 1.2]{robval} shows that the translation of the normal cone at each proximal point is instrumental in characterizing the existence of solutions to the system \eqref{main_prob} when the underlying sets are convex. Motivated by this observation, we develop an alternative characterization of the existence of solutions to a system involving projections onto convex sets by exploiting properties of tangent cones. To this end, we introduce the notion of the polar cone, denoted by $C^\ominus$, which generalizes the orthogonal complement $C^\perp$; see \cite[Definition 6.22]{ba}. The following lemma will be used in the proof of Proposition \ref{convex}.
\begin{lem}
	\label{nortan}
	Let $C$ and $D$ be two nonempty closed convex sets. For any $(p,q)\in C\times D$, $$(\N_Cp-\N_Dq)^\ominus=\T_Cp\cap(-\T_Dq).$$
\end{lem}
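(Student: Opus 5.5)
Since $\N_Cp$ and $\N_Dq$ are convex cones, the set $\N_Cp-\N_Dq=\N_Cp+(-\N_Dq)$ is also a convex cone. So the natural route is to use two facts about polar cones: the polar of a sum of cones is the intersection of the polars, and the polar of a normal cone is the tangent cone. Recall the definition $K^\ominus=\{u\in\HH \mid (\forall k\in K)\ \langle k,u\rangle\le 0\}$.

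\emph{First step: polar of a sum.} I will show directly that for any two cones $K_1,K_2$ (each containing $0$), $(K_1+K_2)^\ominus=K_1^\ominus\cap K_2^\ominus$. For ``$\supseteq$'': if $u$ is nonpositive against every element of $K_1$ and of $K_2$, then $\langle k_1+k_2,u\rangle\le 0$. For ``$\subseteq$'': take $k_2=0\in K_2$ to get $u\in K_1^\ominus$, and symmetrically $u\in K_2^\ominus$. Alternatively, this is Lemma \ref{lemsup} applied to support functions, since $K^\ominus=\{u\mid \sigma_K(u)\le 0\}$ for a cone $K$. Applying this with $K_1=\N_Cp$ and $K_2=-\N_Dq$ gives
\[
(\N_Cp-\N_Dq)^\ominus=(\N_Cp)^\ominus\cap(-\N_Dq)^\ominus ,
\]
and clearly $(-K)^\ominus=-(K^\ominus)$.

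\emph{Second step: polar of the normal cone.} I will invoke the standard duality $(\N_Cp)^\ominus=\T_Cp$ for a closed convex set $C$ and $p\in C$; see \cite[Proposition 6.44]{ba} or the surrounding results. The justification is as follows. By definition, $\N_Cp=(\T_Cp)^\ominus$, or equivalently $\N_Cp=(\mathrm{cone}(C-p))^\ominus$. Also, $\T_Cp=\overline{\mathrm{cone}}(C-p)$ is a nonempty closed convex cone. The bipolar theorem $K^{\ominus\ominus}=K$ for closed convex cones then gives $(\N_Cp)^\ominus=\T_Cp$. The same argument gives $(\N_Dq)^\ominus=\T_Dq$, hence $(-\N_Dq)^\ominus=-\T_Dq$. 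Combining this with the first step yields the claim.

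\emph{Expected obstacle.} The only delicate point is the second step, namely that the tangent cone is closed, so that the bipolar theorem returns exactly $\T_Cp$ and not just a closure. This follows from the paper's definition of $\T_Cp$ as the closed conical hull of $C-p$ (cf.\ \cite[Definition 6.38]{ba}). If that definition were not available, I would instead prove $(\N_Cp)^\ominus=\T_Cp$ via the projection characterization of the polar.
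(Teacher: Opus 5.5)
Your proof is correct and uses the same ingredients as the paper's: $\N_Cp=(\T_Cp)^\ominus$, the rule $(K_1+K_2)^\ominus=K_1^\ominus\cap K_2^\ominus$, and the bipolar theorem for the closed convex cones $\T_Cp$ and $\T_Dq$. The only difference is the order of the steps. You split the polar of the sum first and then dualize each normal cone, whereas the paper first substitutes $\N=\T^\ominus$ and then applies the polar calculus to reach $(\T_Cp\cap(-\T_Dq))^{\ominus\ominus}=\T_Cp\cap(-\T_Dq)$.
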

\begin{proof}
	As mentioned in \cite[Proposition 6.44 (i)]{ba}, $\N_Cp=\T^{\ominus}_Cp$ and $\N_Dq=\T^{\ominus}_Dq$. Using the fact that normal cone operator is the polar cone of the tangent cone operator,
	$$\left(\N_Cp-\N_Dq\right)^\ominus=\left(\T_C^\ominus p-\T_D^\ominus q\right)^\ominus.$$ Invoking \cite[Proposition 6.27, Corollary 6.34]{ba}, we must obtain
	$$(\N_Cp-\N_Dq)^\ominus=\left(\T_Cp\cap(-\T_Dq)\right)^{\ominus\ominus}=\T_Cp\cap(-\T_Dq).$$    
\end{proof}

\begin{prop}
	\label{convex}
	Let $C$ and $D$ be nonempty closed convex subsets of $\mathcal{H}$ and let $(p,q)\in C\times D$. Suppose that $\T_Cp+\T_Dq$ is a closed linear subspace. Then the following are equivalent:
	\begin{enumerate}
		\item[{\rm(i)}] There exists $x\in \mathcal{H}$ such that $\p_Cx=p$ and $\p_Dx=q$.
		\item[{\rm(ii)}] $\T_Cp\cap (-\T_Dq)\subseteq \{q-p\}^\ominus$.
	\end{enumerate}
\end{prop}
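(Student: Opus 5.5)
The plan is to reduce both statements to a membership statement about the cone $\N_Cp-\N_Dq$, and then use Lemma \ref{nortan} together with a closedness argument coming from the hypothesis on $\T_Cp+\T_Dq$.

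\emph{Step 1 (reformulating (i)).} Take $f=\iota_C$ and $g=\iota_D$. Then $\partial f(p)=\N_Cp$ and $\partial g(q)=\N_Dq$, so Proposition \ref{exist} shows that (i) holds if and only if $(p+\N_Cp)\cap(q+\N_Dq)\neq\varnothing$. This means there are $u\in\N_Cp$ and $v\in\N_Dq$ with $p+u=q+v$. Equivalently,
\begin{equation*}
q-p\in \N_Cp-\N_Dq .
\end{equation*}

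\emph{Step 2 (reformulating (ii)).} The sets $\N_Cp$ and $-\N_Dq$ are convex cones, so $K:=\N_Cp-\N_Dq$ is a convex cone. By Lemma \ref{nortan}, $K^\ominus=\T_Cp\cap(-\T_Dq)$. Statement (ii) says $\langle t,q-p\rangle\le 0$ for every $t\in K^\ominus$, that is,
\begin{equation*}
q-p\in K^{\ominus\ominus}=\overline{K},
\end{equation*}
where the equality is the bipolar theorem for convex cones. Hence (i)$\Rightarrow$(ii) is immediate from $K\subseteq\overline{K}$, and needs no hypothesis. For (ii)$\Rightarrow$(i) it remains only to show that $K$ is closed.

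\emph{Step 3 (closedness of $K$; the main obstacle).} Set $K_1=\T_Cp$ and $K_2=-\T_Dq$. These are closed convex cones, and $K_1-K_2=\T_Cp+\T_Dq$ is a closed linear subspace by hypothesis. The qualification condition $0\in\mathrm{sri}(\dom\iota_{K_1}-\dom\iota_{K_2})$ therefore holds. By the exact infimal-convolution formula for the conjugate of a sum (the same Attouch--Brézis/Fenchel--Rockafellar mechanism behind Lemma \ref{fenchrockthm}, cf. \cite[Proposition 15.3 and Theorem 15.3]{ba}), we get
\begin{equation*}
(\iota_{K_1}+\iota_{K_2})^\ast=\iota_{K_1}^\ast\,\square\,\iota_{K_2}^\ast ,
\end{equation*}
with the infimal convolution exact. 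Using $\iota_{K}^\ast=\iota_{K^\ominus}$, this reads $\iota_{(K_1\cap K_2)^\ominus}=\iota_{K_1^\ominus+K_2^\ominus}$. Hence
\begin{equation*}
(K_1\cap K_2)^\ominus=K_1^\ominus+K_2^\ominus=\N_Cp-\N_Dq=K,
\end{equation*}
using $K_1^\ominus=\N_Cp$ and $K_2^\ominus=-\N_Dq$ (\cite[Proposition 6.44]{ba}). A polar cone is always closed, so $K$ is closed, i.e. $K=\overline{K}=K^{\ominus\ominus}$.

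Combining the three steps, (ii) gives $q-p\in\overline{K}=K$, and Step 1 then gives (i). The only delicate point is Step 3: one must check that the hypothesis on $\T_Cp+\T_Dq$ is exactly the sri-type qualification needed for exactness of the infimal convolution of the polar cones. Without it, $\N_Cp-\N_Dq$ need not be closed and (ii) would characterise only the closure.
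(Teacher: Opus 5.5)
Your proof is correct, and it is organized differently from the paper's. The paper never reformulates (i) directly. It takes $f=\iota_C$, $g=\iota_D$, computes $\dom\,\sigma_{\N_Cp}=\T_Cp$ and $\dom\,\sigma_{-\N_Dq}=-\T_Dq$, and applies Corollary \ref{lem1}, i.e.\ the duality of Lemma \ref{fenchrockthm} for $\sigma_{p+\N_Cp}$ and $\sigma_{-(q+\N_Dq)}$. This gives (i) $\Leftrightarrow$ $\inf_u(\langle u,p-q\rangle+\sigma_{\N_Cp}(u)+\sigma_{\N_Dq}(-u))=0$. Lemma \ref{lemsup} and conjugation then turn this into $\sigma^\ast_{\N_Cp-\N_Dq}(q-p)=0$, i.e.\ $q-p\in(\N_Cp-\N_Dq)^{\ominus\ominus}$, and Lemma \ref{nortan} yields (ii).

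Because $\sigma_{\N_Cp}=\iota_{\T_Cp}$ and $\sigma_{-\N_Dq}=\iota_{-\T_Dq}$, the duality used there is exactly the Attouch--Br\'ezis formula you apply to $\iota_{K_1}$ and $\iota_{K_2}$. So the analytic engine is the same, and what differs is the bookkeeping. You split the proof into three parts:
\begin{itemize}
\item an exact algebraic reformulation, (i) $\Leftrightarrow q-p\in K$, via Proposition \ref{exist} and with no duality;
\item a bipolar reformulation, (ii) $\Leftrightarrow q-p\in\overline{K}$;
\item a single use of the hypothesis, to show that $K=\N_Cp-\N_Dq$ is closed.
\end{itemize}

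Your arrangement buys three things:
\begin{itemize}
\item It makes visible that (i)$\Rightarrow$(ii) needs no assumption.
\item It shows that without the hypothesis, (ii) says precisely $q-p\in\overline{\N_Cp-\N_Dq}$. This is the condition that Proposition \ref{prop_main} shows to be sufficient for approximate solvability.
\item It invokes the qualification in the sri form that is actually needed. Corollary \ref{lem1} is stated under $G\cap\mathrm{int}(F)\neq\varnothing$, and the paper relies on the remark following it to replace this by ``$G-F$ is a closed linear subspace''. That replacement is a genuine weakening, since $\T_Cp$ and $-\T_Dq$ can both have empty interior.
\end{itemize}

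The paper's route, for its part, places the result inside the general duality framework of Theorem \ref{main_theorem}. It also produces the infimum criterion as an intermediate characterization.
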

\begin{proof}
	Set $f:=\iota_C$ and $g:=\iota_D$.
	Then $\mathrm{dom}\,\partial f=C$, $\mathrm{dom}\,\partial g=D$, $\partial f(p)=\N_Cp$, and $\partial g(q)=\N_Dq$. Meanwhile, since $p\in C$, $\N_Cp=(C-p)^\ominus$. By the \cite[Proposition 6.33]{ba},
	$$\dom\left(\sigma_{\N_Cp}\right)=\dom\left(\sigma_{(C-p)^\ominus}\right)=\dom\left(\iota_{(C-p)^{\ominus\ominus}}\right)=\dom\left(\iota_{\T_Cp}\right)=\T_Cp.$$
	Similarly, we obtain $\dom\left(\sigma_{-\N_Dq}\right)=-\T_Dq$. In view of Lemma \ref{lem1}, since $\dom\left(\sigma_{-\N_Dq}\right)-\dom\left(\sigma_{\N_Cp}\right)=-\left(\T_Dq+\T_Cp\right)$ is a closed linear subspace, there exists $x\in \mathcal{H}$ such that
	\begin{align*}
		\p_Cx =p \ \mathrm{and} \ \p_Dx=q\ \text{if and only if}\ \inf_{u\in\mathcal{H}}\left(\langle u,p-q\rangle+\sigma_{\N_Cp}(u)+\sigma_{\N_Dq}(-u)\right)=0.
	\end{align*}
	By using Lemma \ref{lemsup} and 
	\begin{align*}
		0=\inf_{u\in\mathcal{H}}\left(\langle u,p-q\rangle+\sigma_{\N_Cp}(u)+\sigma_{\N_Dq}(-u)\right)=\inf_{u\in\mathcal{H}}\left(\langle u,p-q\rangle+\sigma_{\N_Cp-\N_Dq}(u)\right).
	\end{align*}
	This follows that $0=\sup_{u\in\mathcal{H}}\left(\langle u,q-p\rangle-\sigma_{\N_Cp-\N_Dq}(u)\right).$ But by definition of a Fenchel conjugate, $\sigma^\ast_{\N_Cp-\N_Dq}(q-p)=0.$
	Using Lemma \ref{nortan},
	\begin{align*}
		\p_Cx =p \ \mathrm{and} \ \p_Dx=q\
		\text{if and only if}\ \iota_{\left[\N_Cp-\N_Dq\right]^{\ominus\ominus}}(q-p)=0.
	\end{align*}
	But, $\iota_{\left[\N_Cp-\N_Dq\right]^{\ominus\ominus}}(q-p)=0$ if and only if $\{q-p\}\subseteq\left[\T_Cp\cap(-\T_Dq)\right]^\ominus$.
	Thus, 
	\begin{align*}
		\p_Cx =p \ \mathrm{and} \ \p_Dx=q\ \text{if and only if}\ \T_Cp\cap(-\T_Dq)\subseteq\{q-p\}^\ominus.
	\end{align*}    
\end{proof}

As a direct consequence of this proposition, consider $p$ to be a point on the boundary of $C$ that is not a support point of $C$. This means $\N_Cp=\{0\}$ and so, $\T_Cp=\mathcal{H}$. By the Proposition \ref{convex}, the system has a solution if and only if $-\T_Dq\subseteq\{q-p\}^\ominus$. That is, $\{p-q\}\subseteq \T_D^\ominus q=\N_Dq.$ Since $\{p-q\}\subseteq\N_Dq$ means $p\in q+\N_Dq$, the system has a solution if and only if $q=\p_Dp$. This observation was discussed by Combettes and Reyes in \cite[Remark 1.2]{co}.\bigskip

In subsequent discussions, we denote $\widehat{X}$ as the closed linear subspace parallel to the closed affine subspace $X$. Moreover, for sets $A$ and $B$, we define $A+B:=\{a+b\ \vert\ a\in A, b\in B\}$. The following results are direct consequences of Proposition \ref{convex} in the particular settings of closed convex cones and closed affine subspaces.
\begin{exm}\rm
	\label{exampleprojector}
	In view of Proposition \ref{convex}, let us carefully investigate the system involving projection operators onto some special type of convex sets.
	\begin{enumerate}
		\item Let $K$ and $M$ be nonempty closed convex cones of $\HH$. For any $(p,q)\in K\times M$,
		\begin{center}
			$\T_Kp=\overline{K+\RR p}$ and $\T_Mq=\overline{M+\RR q}$, where $\RR p=\{rp\ \vert\ r\in\RR\}$ and $\RR q=\{rq\ \vert\ r\in\RR\}$.
		\end{center}
		If $\overline{K+\mathbb{R}p}+\overline{M+\mathbb{R}q}$ is a closed linear subspace of $\HH$ then the following are equivalent:
		\begin{enumerate}
			\item[{\rm(i)}] There exists $x\in \mathcal{H}$ such that $\p_Kx=p$ and $\p_Mx=q$.
			\item[{\rm(ii)}] $\overline{K+\mathbb{R}p}\cap-(\overline{M+\mathbb{R}q})\subseteq \{q-p\}^\ominus$.
		\end{enumerate}
		\item 	Let $X$ and $Y$ be nonempty closed affine subspaces of $\mathcal{H}$ such that $\widehat{X}+\widehat{Y}$ is a closed linear subspace of $\HH$. Then $\T_Xp=\widehat{X}$ and $\T_Yq=\widehat{Y}$. By applying Proposition \ref{convex}, there exists $z\in \mathcal{H}$ such that
		\begin{center}
			$\p_Xz=p$ and $\p_Yz=q$ if and only if $\widehat{X}\cap\widehat{Y}\subseteq\{q-p\}^\ominus$.
		\end{center}
		For $\widehat{X}\cap\widehat{Y}\subseteq\{q-p\}^\ominus$, $q-p\in\left(\widehat{X}\cap\widehat{Y}\right)^\ominus=\left(\widehat{X}\cap\widehat{Y}\right)^\perp$. Now, if $\langle q-p,0\rangle=0$, for any $x\in\widehat{X}\cap\widehat{Y}$, then $x\in\{q-p\}^\ominus$. That is, $\widehat{X}\cap\widehat{Y}\subseteq\{q-p\}^\ominus$. Then the following are equivalent:
		\begin{enumerate}
			\item[{\rm(i)}] There exists $z\in \mathcal{H}$ such that $\p_Xz=p$ and $\p_Yz=q$.
			\item[{\rm(ii)}] $q-p\in\left(\widehat{X}\cap\widehat{Y}\right)^\perp$.
		\end{enumerate}
	\end{enumerate}  
\end{exm}
The following example illustrates the application of the preceding results and shows that the solution set of the system \eqref{main_prob} is a convex subset of $\HH$.
\begin{exm}\rm
	Consider $\mathcal{P}_2[x]$ be the vector space of all polynomials of degree at most 2 defined on the interval $[-1,1]$ whose coefficients are real numbers endowed with the inner product
	\begin{center}
		$\langle f,g\rangle=\displaystyle\int^1_{-1}f(t)g(t)\ dt$, for $f,g\in\mathcal{P}_2[x].$
	\end{center}
	Let $X=\left\{bx+cx^2\in\mathcal{P}_2[x]|\ b,c\in\mathbb{R}\right\}$ and $Y=\left\{1+dx\in\mathcal{P}_2[x]|\ d\in\mathbb{R}\right\}$.
	It can be easily checked that $X$ and $Y$ are affine subspaces of $\mathcal{P}_2[x]$ with
	\begin{center}
		$\widehat{X}=X$ and $\widehat{Y}=\{dx\in\mathcal{P}_2[x]|\ d\in\mathbb{R}\}$.
	\end{center}
	Then $\widehat{X}+\widehat{Y}=X$ which is a closed linear subspace. Now, let us show that there exists a polynomial $h\in\mathcal{P}_2[x]$ such that
	\begin{center}
		$\p_Xh=x+x^2$ and $\p_Yh=1+x$.
	\end{center}
	We first observed that $\widehat{X}\cap\widehat{Y}=\widehat{Y}$. In view of part 2 of Example \ref{exampleprojector}, the existence of a solution is guaranteed if
	$$\widehat{Y}\subseteq\{(1+x)-(x+x^2)\}^\ominus=\{1-x^2\}^\ominus.$$
	Indeed,
	\begin{align*}
		\{1-x^2\}^\ominus=&\left\{u_1+u_2x+u_3x^2\in\mathcal{P}_2[x]\bigg\vert\ \displaystyle\int^1_{-1} (1-t^2)(u_1+u_2t+u_3t^2)\ dt\leq 0\right\}\\
		=&\left\{u_1+u_2x+u_3x^2\in\mathcal{P}_2[x]\big\vert\  5u_1+u_3\leq 0\right\}.
	\end{align*}
	It follows that $\widehat{Y}\subseteq\{1-x^2\}^\ominus.$ Henceforth, there is a solution to the system of equations. In fact, the set of all solution to the system of equations is given by the convex set
	$$\left\{\alpha_1+x+\alpha_2x^2\bigg\vert\ \dfrac{\alpha_1}{3}+\dfrac{\alpha_2}{5}=\dfrac{1}{5}\right\}.$$
\end{exm}\bigskip

So far, we have investigated the existence of an analytic solution to the system \eqref{main_prob}. However, in most cases, it is not easy to obtain an exact solution. In the worst case, the system might have no solution. For the purpose of numerical methods, we end this section by providing necessary and sufficient conditions for the existence of an approximate solution to the system \eqref{main_prob}. A sequence $\{x_n\}$ is said to be an \emph{approximate solutions} of the system described in \ref{main_prob} provided, for each $\varepsilon>0$, there exists $N\in\mathbb{N}$ such that
\begin{center}
	for any $n\geq N$, $\displaystyle\sum_{i\in\mathbb{I}}\lVert \pr_{f_i}x_n-p_i\rVert<\varepsilon.$
\end{center}\bigskip

The next proposition provides a characterization for the existence of solutions to the system \eqref{main_prob} in terms of a sequence of approximate solutions. In fact, one of the solutions of the system is the limit of the sequence of approximate solutions. 
\begin{prop}
	\label{approxconverge}
	For each $i\in \mathbb{I}$, let $f_i\in \Gamma_0(\mathcal{H})$ and $p_i\in \mathrm{dom}\,\partial f_i$. There exists a convergent sequence $\{x_n\}_{n\in\mathbb{N}}\subseteq\mathcal{H}$ which is an approximate solutions to the system
	\begin{center}
		$(\forall i\in\mathbb{I})\ \pr_{f_i}x=p_i$
	\end{center}
	if and only if the system has a solution.
\end{prop}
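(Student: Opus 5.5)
The proof splits into two directions, and both rest on the continuity of proximity operators. Recall that for every $f\in\Gamma_0(\HH)$, $\pr_f$ is firmly nonexpansive, hence nonexpansive and in particular $1$-Lipschitz continuous \cite[Proposition 12.28]{ba}.

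\emph{Sufficiency.} Suppose the system has a solution $\bar x\in\HH$, so that $\pr_{f_i}\bar x=p_i$ for every $i\in\II$. Take the constant sequence $x_n:=\bar x$ for all $n\in\NN$. It converges trivially, and $\sum_{i\in\II}\Vert\pr_{f_i}x_n-p_i\Vert=0<\varepsilon$ for every $\varepsilon>0$ and every $n$. Hence it is a convergent sequence of approximate solutions.

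\emph{Necessity.} Let $\{x_n\}_{n\in\NN}$ be a sequence of approximate solutions converging to some $\bar x\in\HH$. The definition of approximate solutions says exactly that $\sum_{i\in\II}\Vert\pr_{f_i}x_n-p_i\Vert\to0$. Since every term is nonnegative, this gives $\pr_{f_i}x_n\to p_i$ for each $i\in\II$. On the other hand, nonexpansiveness gives
\[
\Vert\pr_{f_i}x_n-\pr_{f_i}\bar x\Vert\leq\Vert x_n-\bar x\Vert\to0 ,
\]
so $\pr_{f_i}x_n\to\pr_{f_i}\bar x$. By uniqueness of limits in the Hilbert space, $\pr_{f_i}\bar x=p_i$ for all $i\in\II$. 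Thus $\bar x$ solves the system; equivalently, by Proposition \ref{exist}, $\bar x\in\bigcap_{i\in\II}(p_i+\partial f_i(p_i))$. This also establishes the remark preceding the statement: the limit of a convergent sequence of approximate solutions is itself a solution.

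The only step needing justification is the continuity of $\pr_{f_i}$, which I will cite from \cite{ba} rather than prove. If a self-contained argument is preferred, one could instead use the closedness of the graph of the maximally monotone operator $\partial f_i$. Since $x_n-\pr_{f_i}x_n\in\partial f_i(\pr_{f_i}x_n)$ and $(\pr_{f_i}x_n,\,x_n-\pr_{f_i}x_n)\to(p_i,\,\bar x-p_i)$ strongly, we obtain $\bar x-p_i\in\partial f_i(p_i)$, and \eqref{proxsubd} then gives $\pr_{f_i}\bar x=p_i$. I do not expect any genuine obstacle; the argument is a direct continuity argument.
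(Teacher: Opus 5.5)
Your proposal is correct and follows essentially the same route as the paper: nonexpansiveness of each $\mathrm{Prox}_{f_i}$ turns $x_n\to\bar x$ into $\mathrm{Prox}_{f_i}x_n\to\mathrm{Prox}_{f_i}\bar x$, and combined with $\mathrm{Prox}_{f_i}x_n\to p_i$ this forces $\mathrm{Prox}_{f_i}\bar x=p_i$. The paper writes this step as an explicit $\varepsilon/2$ triangle-inequality estimate instead of invoking uniqueness of limits, and proves the converse with the same constant sequence you use.
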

\begin{proof}
	Let $\varepsilon>0$. Since $x_n\rightarrow z$, there exists $N_1\in\mathbb{N}$ such that for any $n\geq N_1$, $\lVert z-x_n\rVert<\dfrac{\varepsilon}{2}.$
	By the nonexpansiveness of proximity operators, we have
	\begin{center}
		$(\forall i\in\mathbb{I})\ \lVert \pr_{f_i}z-\pr_{f_i}x_n\rVert\leq \lVert z-x_n\rVert<\dfrac{\varepsilon}{2}$, for any $n\geq N_1$.
	\end{center}
	Now, since $\{x_n\}$ is an approximate solutions, there exists an $N_2\in\mathbb{N}$ such that for any $n\geq N_2$,
	\begin{center}
		$(\forall i\in\mathbb{I})\  \lVert \pr_{f_i}x_n-p_i\rVert<\dfrac{\varepsilon}{2}$.
	\end{center}
	Take $N=\max\{N_1,N_2\}$ so that for all $n\geq N$,
	\begin{align*}
		(\forall i\in\mathbb{I})\  \lVert \pr_{f_i}z-p_i\rVert\leq \lVert \pr_{f_i}z-\pr_{f_i}x_n\rVert+\lVert\pr_{f_i}x_n-p_i\rVert<\dfrac{\varepsilon}{2}+\dfrac{\varepsilon}{2}=\varepsilon.
	\end{align*}
	Since $\varepsilon$ is arbitrary, $(\forall i\in\mathbb{I})\ \pr_{f_i}z=p_i.$
	Conversely, if the system has a solution, say $x$. We take the sequence $\{x_n\}_{n\in\mathbb{N}}=\{x\}.$ Consequently, the sequence $\{x_n\}$ is an approximate solutions with $x_n\rightarrow x$.    
\end{proof}

Suppose that system \eqref{main_prob} has no solution; however, we can find approximate solutions to the system. Then, Proposition \ref{approxconverge} tells us that the sequence of approximate solutions diverges. In other words, our Main Problem described by the system \eqref{main_prob} is an ill-posed problem. To be more precise, the next result shows that the growth norm of this sequence of approximate solutions will eventually blow up, which generalizes \cite[Proposition 2.1]{re}.

\begin{prop}
	\label{solblows}
	For each $i\in \mathbb{I}$, let $f_i\in \Gamma_0(\mathcal{H})$ and $p_i\in \mathrm{dom}\,\partial f_i$. Suppose the system 	\begin{center}
		$(\forall i\in\mathbb{I})\ \pr_{f_i}x=p_i$ has no solution.
	\end{center}
	If there exists an approximate solutions $\{x_n\}_{n\in\NN}$ to the system then $\Vert x_n\Vert\rightarrow +\infty$ as $n\rightarrow+\infty$.
\end{prop}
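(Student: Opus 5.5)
The plan is to argue by contradiction, combining weak compactness of bounded sets in $\HH$ with the weak--strong sequential closedness of the graph of a maximally monotone operator. Suppose the system $(\forall i\in\II)\ \pr_{f_i}x=p_i$ has no solution, let $\{x_n\}_{n\in\NN}$ be approximate solutions, and suppose that $\Vert x_n\Vert\not\rightarrow+\infty$. Then $\liminf_n\Vert x_n\Vert<+\infty$, so there is a subsequence $\{x_{n_k}\}_{k\in\NN}$ that is bounded. Since bounded sequences in a Hilbert space have weakly convergent subsequences, we may pass to a further subsequence (still denoted $\{x_{n_k}\}$) with $x_{n_k}\rightharpoonup z$ for some $z\in\HH$. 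Because $\{x_n\}$ is an approximate solution, for each $i\in\II$ we have $\pr_{f_i}x_{n_k}\to p_i$ strongly; this remains true along any subsequence.

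The key step is to transfer the relation \eqref{proxsubd} to the limit. For each $i$ and $k$, set $y_k^i:=\pr_{f_i}x_{n_k}$. By \eqref{proxsubd}, $(y_k^i,\,x_{n_k}-y_k^i)\in\mathrm{gra}\,\partial f_i$. We have $y_k^i\to p_i$ strongly and $x_{n_k}-y_k^i\rightharpoonup z-p_i$ weakly. Since $\partial f_i$ is maximally monotone \cite[Theorem 21.2]{ba}, its graph is sequentially closed in $\HH^{\mathrm{strong}}\times\HH^{\mathrm{weak}}$ \cite[Proposition 20.38]{ba}. Hence $z-p_i\in\partial f_i(p_i)$, and \eqref{proxsubd} gives $\pr_{f_i}z=p_i$ for every $i\in\II$. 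Equivalently, by Proposition \ref{exist}, $z\in\bigcap_{i\in\II}(p_i+\partial f_i(p_i))$. Thus $z$ solves the system, contradicting the hypothesis, and so $\Vert x_n\Vert\to+\infty$.

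The main obstacle is that the subsequence converges only weakly, so Proposition \ref{approxconverge} cannot be applied directly: it relies on nonexpansiveness, which needs strong convergence of the $x_n$. This is why the weak--strong closedness of $\mathrm{gra}\,\partial f_i$ is the crucial ingredient. If one prefers to avoid that result, a fallback is to verify the limit inclusion by hand from the subdifferential inequality. For every $y\in\HH$,
$$f_i(y)\geq f_i(y_k^i)+\langle y-y_k^i,\,x_{n_k}-y_k^i\rangle.$$
As $k\to\infty$, the inner product converges to $\langle y-p_i,\,z-p_i\rangle$, since it pairs a strongly convergent sequence with a weakly convergent one. Moreover $\liminf_k f_i(y_k^i)\geq f_i(p_i)$ by lower semicontinuity. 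Together these give $f_i(y)\geq f_i(p_i)+\langle y-p_i,\,z-p_i\rangle$ for all $y$, which is exactly $z-p_i\in\partial f_i(p_i)$.
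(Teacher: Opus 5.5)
Your argument is correct and complete. It follows the same skeleton as the argument the paper relies on, but uses a different closure ingredient at the key step.

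The paper does not argue directly. It observes that $\mathrm{Id}-\mathrm{Prox}_{f_i}$ is nonexpansive \cite[Proposition 12.28]{ba} and invokes \cite[Lemma 4.6]{robval} with $D=\mathcal{H}$. The limit passage there is, in effect, the demiclosedness principle for the nonexpansive map $N_i:=\mathrm{Id}-\mathrm{Prox}_{f_i}$: from $x_{n_k}\rightharpoonup z$ and $x_{n_k}-N_ix_{n_k}=\mathrm{Prox}_{f_i}x_{n_k}\to p_i$ one gets $z-N_iz=\mathrm{Prox}_{f_i}z=p_i$.

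You instead use strong--weak sequential closedness of $\mathrm{gra}\,\partial f_i$, or prove it by hand from the subgradient inequality and lower semicontinuity. Through \eqref{proxsubd} the two are the same fact, since $(x,p)\in\mathrm{gra}\,\mathrm{Prox}_{f_i}$ if and only if $(p,x-p)\in\mathrm{gra}\,\partial f_i$. So demiclosedness of $\mathrm{Prox}_{f_i}$ at $p_i$ is precisely the closedness property you verify.

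The paper's route is shorter. As used there, it needs only nonexpansiveness of $\mathrm{Id}-T_i$, so it applies to a wider class of operators than proximity operators. Yours is self-contained, and in its fallback form entirely elementary: it relies only on $\langle a_k,b_k\rangle\to\langle a,b\rangle$ for $a_k\to a$, $b_k\rightharpoonup b$, and on lower semicontinuity of $f_i$.

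Your observation that Proposition \ref{approxconverge} cannot be reused, because only weak convergence is available, is exactly why some such closedness property is indispensable.
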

\begin{proof}
	For each $f_i\in\Gamma_0(\mathcal{H})$, $(\mathrm{Id}-\pr_{f_i})$ is a nonexpansive operator \cite[Proposition 12.28]{ba}. Then, the proof concludes by invoking \cite[Lemma 4.6]{robval} with $D=\mathcal{H}$.    
\end{proof}

\begin{prop}
	\label{prop_main}
	Let $f,g\in\Gamma_0(\mathcal{H})$. Let $(p,q)\in\emph{dom}\,\partial f\times\emph{dom}\,\partial g$. If 
	$$\displaystyle\inf_{u\in\HH}\ \langle u,p-q\rangle+\sigma_{\partial f(p)}(u)+\sigma_{\partial g(q)}(-u)=0$$
	then there exists an approximate solution $\{x_n\}_{n\in\NN}$ to the system
	\begin{center}
		$\emph{Prox}_fx=p$ and $\emph{Prox}_gx=q$.
	\end{center}
\end{prop}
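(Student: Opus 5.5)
The plan is to read the hypothesis as a statement about the support function of a single convex set, extract a sequence of near-solutions from it, and then convert that sequence into approximate solutions using the nonexpansiveness of proximity operators. No qualification condition such as the one in Corollary \ref{lem1} should be needed.

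First, set $A:=p+\partial f(p)$ and $B:=q+\partial g(q)$. Both are nonempty (since $p\in\dom\,\partial f$ and $q\in\dom\,\partial g$), closed and convex, because subdifferentials of functions in $\Gamma_0(\HH)$ have these properties. Exactly as in the computation at the start of the proof of Theorem \ref{main_theorem}, and using Lemma \ref{lemsup}, the objective can be rewritten for every $u\in\HH$ as
$$\langle u,p-q\rangle+\sigma_{\partial f(p)}(u)+\sigma_{\partial g(q)}(-u)=\sigma_{A}(u)+\sigma_{-B}(u)=\sigma_{A-B}(u),$$
where $E:=A-B$ is a nonempty convex set. The hypothesis therefore reads $\inf_{u\in\HH}\sigma_E(u)=0$.

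Second, I show that this forces $0\in\overline{E}$, arguing by contradiction. Suppose $0\notin\overline{E}$, and let $e_0:=\p_{\overline{E}}0$; then $e_0\neq 0$. The projection characterization gives $\langle e-e_0,-e_0\rangle\le 0$ for all $e\in E$, that is, $\langle e,-e_0\rangle\le-\Vert e_0\Vert^2$. Taking $u=-e_0$ yields $\sigma_E(-e_0)\le-\Vert e_0\Vert^2<0$. Since $\sigma_E$ is positively homogeneous, scaling $u$ then drives $\inf_u\sigma_E(u)$ to $-\infty$, contradicting the hypothesis. Hence $0\in\overline{A-B}$, so there exist $a_n\in A$ and $b_n\in B$ with $\Vert a_n-b_n\Vert\to 0$.

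Third, I take $x_n:=a_n$. By Proposition \ref{exist} (equivalently \eqref{proxsubd}), $a_n\in p+\partial f(p)$ gives $\pr_f x_n=p$ exactly, and $b_n\in q+\partial g(q)$ gives $\pr_g b_n=q$. Since $\pr_g$ is nonexpansive,
$$\Vert \pr_g x_n-q\Vert=\Vert\pr_g a_n-\pr_g b_n\Vert\le\Vert a_n-b_n\Vert\to 0.$$
Therefore $\Vert\pr_f x_n-p\Vert+\Vert\pr_g x_n-q\Vert\to 0$, which is precisely the definition of an approximate solution.

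The main obstacle is the second step, namely passing from the infimum being $0$ to $0\in\overline{A-B}$. This requires the separation argument together with the observation that the infimum of a support function is either $0$ or $-\infty$. The remaining steps are direct applications of \eqref{proxsubd} and nonexpansiveness.
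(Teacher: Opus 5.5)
Your proof is correct and takes essentially the paper's route. Both arguments turn the hypothesis into $0\in\overline{(p+\partial f(p))-(q+\partial g(q))}$ (the paper by identifying the conjugate of a support function with the indicator of the closure, you by projecting $0$ onto $\overline{E}$), take a point of $p+\partial f(p)$ close to a point of $q+\partial g(q)$, and control $\mathrm{Prox}_g$ by (firm) nonexpansiveness; your choice of $a_n,b_n$ with $\Vert a_n-b_n\Vert\to 0$ is tidier than the paper's constant sequence $x_n=x$, which is built for one fixed $\varepsilon$ and so does not by itself meet the definition of an approximate solution.
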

\begin{proof}
	Given the assumption
	$$\displaystyle\inf_{u\in\HH}\ \langle u,p-q\rangle+\sigma_{\partial f(p)}(u)+\sigma_{\partial g(q)}(-u)=0,$$
	it must be the case that $\displaystyle\sup_{u\in\HH}\left(\langle u, q-p\rangle-\sigma_{\partial f(p)-\partial g(q)}(u)\right)=0$. Furthermore, we get $\iota_{\overline{\partial f(p)-\partial g(q)}}(q-p)=0.$
	This means $q-p\in \overline{\partial f(p)-\partial g(q)}$, that is, $0\in \overline{(p+\partial f(p))-(q+\partial g(q))}$. For each $\varepsilon>0$, we can find $x\in p+\partial f(p)$ and $y\in q+\partial g(q)$ such that $\Vert x-y\Vert<\varepsilon.$ Invoking \cite[Proposition 12.26]{ba} to $q=\pr_gy$ and $\pr_gx$,
	$$\langle \pr_gx-q,y-q\rangle+g(q)\leq g(\pr_gx)$$
	and $\langle q-\pr_gx,x-\pr_gx\rangle+g(\pr_gx)\leq g(q).$
	Using the two inequalities above,
	\begin{align*}
		\Vert \pr_gx-q\Vert^2&=\langle \pr_gx-q, \pr_gx-y\rangle+\langle \pr_gx-q, y-q\rangle\\
		&\leq\langle \pr_gx-q, \pr_gx-y\rangle+g(\pr_gx)-g(q)\\
		&=\langle q-\pr_gx, y-\pr_gx\rangle+g(\pr_gx)-g(q)\\
		&=\langle q-\pr_gx, y-x\rangle+\langle q-\pr_gx, x-\pr_gx\rangle+g(\pr_gx)-g(q)\\
		&\leq\langle q-\pr_gx, y-x\rangle.
	\end{align*}
	Finally, since $q=\pr_gy$ and knowing that proximity operator are firmly nonexpansive,
	$$\Vert \pr_gx-q\Vert^2\leq\langle \pr_gy-\pr_gx, y-x\rangle\leq\Vert y-x\Vert^2.$$
	Consider the sequence $\{x_n\}_{n\in\NN}$, where $x_n=x$ for all $n\in\NN$. We then conclude,
	\begin{center}
		$\Vert \pr_gx_n-q\Vert\leq\Vert y-x\Vert<\varepsilon$ and $\Vert \pr_fx_n-p\Vert=0<\varepsilon$.
	\end{center}
\end{proof}

Before presenting our second main result, we provide an analogous definition of locally constant mapping mentioned from \cite{vaja}, \cite[Definition 1.5]{fall}, and \cite[Example 1.0.3]{hart}.
A set-valued operator $T:\HH\rightarrow 2^{\HH}$ is said to be {\it locally constant} at $p\in\HH$, if there exists $\delta>0$ such that
\begin{center}
	$T(z)\subseteq T(p)$, for all $z\in B_{\delta}(p):=\{x\in\HH\ \vert\ \Vert x-p\Vert<\delta\}$.  
\end{center}

In light of Corollary \ref{lem1}, one may ask what occurs in the case where $G\cap\,\mathrm{int}(F)=\varnothing$. The following result shows that local constancy of the subdifferential provides a sufficient condition for statement (ii) in Corollary \ref{lem1} to characterize the existence of approximate solutions.
\begin{thm}
	\label{appro}
	Let $f,g\in\Gamma_0(\mathcal{H})$, and let $(p,q)\in\emph{dom}\,\partial f\times\emph{dom}\,\partial g$. If $\partial f$ and $\partial g$ are locally constant at $p$ and $q$, respectively, then the following statements are equivalent:
	\begin{enumerate}
		\item[{\rm(i)}] There exists an approximate solutions $\{x_n\}_{n\in\NN}$ to the system $\emph{Prox}_fx=p$ and $\emph{Prox}_gx=q$.
		\item[{\rm(ii)}] $\displaystyle\inf_{u\in\HH}\ \langle u,p-q\rangle+\sigma_{\partial f(p)}(u)+\sigma_{\partial g(q)}(-u)=0.$
	\end{enumerate}
\end{thm}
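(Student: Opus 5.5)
For (ii)$\Rightarrow$(i) I would simply invoke Proposition \ref{prop_main}. It needs no local constancy: it already produces an approximate solution whenever the infimum in (ii) vanishes. The substance of the theorem is therefore the converse (i)$\Rightarrow$(ii), and this is where local constancy of $\partial f$ and $\partial g$ is used.

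For (i)$\Rightarrow$(ii), let $\{x_n\}$ be an approximate solution and put $p_n:=\pr_f x_n$ and $q_n:=\pr_g x_n$, so that $p_n\to p$ and $q_n\to q$. By \eqref{proxsubd}, $x_n-p_n\in\partial f(p_n)$ and $x_n-q_n\in\partial g(q_n)$. Let $\delta_f,\delta_g>0$ be the radii given by local constancy at $p$ and $q$. For all $n$ large enough we have $p_n\in B_{\delta_f}(p)$ and $q_n\in B_{\delta_g}(q)$, hence
\begin{center}
$x_n-p_n\in\partial f(p)$ and $x_n-q_n\in\partial g(q)$.
\end{center}
Subtracting the two memberships gives $q_n-p_n\in\partial f(p)-\partial g(q)$. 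Letting $n\to\infty$ yields $q-p\in\overline{\partial f(p)-\partial g(q)}$. This is the key step, and it is exactly where the hypothesis is spent: without local constancy the subgradients $x_n-p_n$ lie in the moving sets $\partial f(p_n)$, and there is no way to place them in the fixed set $\partial f(p)$.

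It remains to translate this closure membership into (ii), which amounts to reading the computation in the proof of Proposition \ref{prop_main} backwards. Put $S:=\partial f(p)-\partial g(q)$. This set is convex, since subdifferentials are convex, and nonempty. By Lemma \ref{lemsup}, applied with $A=\partial f(p)$ and $B=-\partial g(q)$, we have $\sigma_S(u)=\sigma_{\partial f(p)}(u)+\sigma_{\partial g(q)}(-u)$. Moreover $\sigma_S^\ast=\iota_{\overline S}$, because $\overline S$ is closed and convex and $\sigma_S=\sigma_{\overline S}$. Hence
\[
\sup_{u\in\HH}\bigl(\langle u,q-p\rangle-\sigma_S(u)\bigr)=\iota_{\overline S}(q-p)=0 .
\]
Changing sign gives $\inf_{u}\bigl(\langle u,p-q\rangle+\sigma_{\partial f(p)}(u)+\sigma_{\partial g(q)}(-u)\bigr)=0$, which is (ii).

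The conjugacy step is standard, so I expect the only delicate point to be the bookkeeping above, namely ensuring that the tail of the sequence lies inside both balls at once.
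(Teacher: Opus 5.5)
Your proof is correct and is essentially the paper's argument. Direction (ii)$\Rightarrow$(i) comes from Proposition~\ref{prop_main}. For (i)$\Rightarrow$(ii), local constancy places $x_n-\mathrm{Prox}_f x_n=\mathrm{Prox}_{f^\ast}x_n$ and $x_n-\mathrm{Prox}_g x_n$ in the fixed sets $\partial f(p)$ and $\partial g(q)$, which yields $q-p\in\overline{\partial f(p)-\partial g(q)}$, and this is (ii) via $\sigma_S^\ast=\iota_{\overline S}$.

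The only difference is cosmetic: the paper argues by contradiction using a positive distance $\varepsilon_0$ from $q-p$ to that closure, while you pass to the limit directly. You also label the directions correctly, whereas the paper's text attributes Proposition~\ref{prop_main} to (i)$\Rightarrow$(ii).
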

\begin{proof} For the proof of (i)$\Rightarrow$(ii), we use Proposition \ref{prop_main}. As for the converse statement, we first take note that by Lemma \ref{lemsup},
	$$\displaystyle\inf_{u\in\HH}\ \langle u,p-q\rangle+\sigma_{\partial f(p)}(u)+\sigma_{\partial g(q)}(-u)=-\displaystyle\sup_{u\in\HH}\ \langle u,q-p\rangle+\sigma_{\partial f(p)-\partial g(q)}(u).$$
	Let us proceed by contradiction. Suppose that (i) holds but
	\begin{eqnarray}
		\label{inf}
		\displaystyle\inf_{u\in\HH}\ \langle u,p-q\rangle+\sigma_{\partial f(p)}(u)+\sigma_{\partial g(q)}(-u)\neq 0.    
	\end{eqnarray}
	From \eqref{inf}, we see that $$\iota_{\overline{\partial f(p)-\partial g(q)}}(q-p)=\sigma^\ast_{\partial f(p)-\partial g(q)}(q-p)=\displaystyle\sup_{u\in\HH}\left(\langle u, q-p\rangle-\sigma_{\partial f(p)-\partial g(q)}(u)\right)\neq 0.$$
	With this, $q-p\notin \overline{\partial f(p)-\partial g(q)}$. As a consequence, there exists an $\varepsilon_0>0$ such that for all $v\in\partial f(p)$ and for all $w\in\partial g(q)$,
	\begin{equation}
		\label{closed}
		\Vert (v-w)-(q-p)\Vert\geq \varepsilon_0.
	\end{equation}
	But by the assumption (i) corresponding to $\varepsilon_0$, there exist a sequence $\{x_n\}\subseteq\HH$ and an $N_1\in\mathbb{N}$ for which
	\begin{equation}
		\label{approx}
		\Vert\pr_fx_n-p\Vert+\Vert\pr_gx_n-q\Vert<\varepsilon_0,\ \mathrm{whenever}\ n\geq N_1.
	\end{equation}
	Since $\partial f$ is locally constant at $p$, there exists $\delta>0$ such that $\partial f(z)\subseteq \partial f(p)$, for all $z\in B_{\delta}(p)$. Now, since $\pr_fx_n\rightarrow p$, corresponding to $\delta$, there exists $M_1\in\NN$ such that, for all $n\geq M_1$,
	\begin{center}
		$\Vert \pr_fx_n-p\Vert<\delta$, that is, $\pr_fx_n\in B_{\delta}(p)$.
	\end{center}
	Consequently, $\partial f\left(\pr_fx_n\right)\subseteq \partial f(p)$ for all $n\geq M_1$. But by Remark \ref{fenchelsubd}, $\pr_{f^\ast}x_n\in\partial f(p)$ for all $n\geq M_1$.
	Since $\partial g$ is locally constant at $q$ and $\pr_gx_n\rightarrow q$, applying similar approach yields $\pr_{g^\ast}x_n\in\partial g(q)$ for all $n\geq M_2$, for some $M_1\in\NN$.
	In view of \eqref{closed}, letting $N=\max\{N_1,M_1.M_2\}$ yields
	\begin{align*}
		\varepsilon_0\leq&\big\Vert \left(\pr_{f^\ast}x_N-\pr_{g^\ast}x_N\right)-(q-p)\big\Vert\\
		=&\big\Vert \left(x_N-\pr_fx_N\right)-\left(x_N-\pr_gx_N\right)-q+p\big\Vert\\
		\leq &\Vert p-\pr_fx_N\Vert+\Vert\pr_gx_N-q\Vert,
	\end{align*}
	a contradiction to the inequality \eqref{approx}.
	Hence, assertion (ii) must be true.
\end{proof}

We can use Theorem \ref{appro} to provide a characterization of approximate solutions of the system involving projection operators onto convex sets.
\begin{exm}\rm
	\label{example}
	The next two examples can be easily verified.
	\begin{enumerate}
		\item Let $C$ and $D$ be nonempty closed convex subsets of $\mathcal{H}$. Let $(p,q)\in C\times D$. If $\N_C\left(B_\delta(p)\right)\subseteq \N_C(p)$ and $\N_D\left(B_\delta(q)\right)\subseteq \N_D(q)$, for some $\delta>0$ then
		\begin{center}
			$(\forall \varepsilon>0) (\exists x\in\mathcal{H})\, \Vert\p_Cx-p\Vert+\Vert\p_Dx-q\Vert<\varepsilon\Leftrightarrow \langle u, p-q\rangle\geq 0, \forall u\in \T_C(p)\cap -\T_D(q)$.
		\end{center}
		The same equivalence statements will hold when $C$ and $D$ are replace as closed convex cones.
		\item Let $X$ and $Y$ be nonempty closed affine subspaces of $\mathcal{H}$. As shown in \cite[Example 6.43]{ba}, we deduce that $\N_X\left(B_\delta(p)\right)\subseteq \widehat{X}^\perp=\N_X(p)$ and $\N_Y\left(B_\delta(q)\right)\subseteq\widehat{Y}^\perp= \N_Y(q)$. Consequently, for any $(p,q)\in X\times Y$,
		\begin{center}
			$(\forall \varepsilon>0) (\exists x\in\mathcal{H})\, \Vert\p_Xx-p\Vert+\Vert\p_Yx-q\Vert<\varepsilon\Leftrightarrow \langle u, p-q\rangle= 0, \forall u\in \widehat{X}\cap\widehat{Y}$.
		\end{center}
	\end{enumerate}
\end{exm}

We end this section with a corollary showing that \cite[Proposition 2.5]{co} can be recovered using Theorem \ref{appro}, where the authors proved that linear independence is a necessary and sufficient condition for the existence of an approximate inverse best approximation property.
\begin{cor}
	Let $U$ and $V$ be nonempty closed linear subspaces of $\mathcal{H}$. Then the following are equivalent:
	\begin{enumerate}
		\item[\rm{(i)}] For each $(p,q)\in U\times V$ and for each $\varepsilon>0$, there exists $x\in\mathcal{H}$ such that
		$$\max\{\Vert\p_Ux-p\Vert, \Vert\p_Vx-q\Vert\}<\varepsilon.$$
		\item[\rm{(ii)}] $U\cap V=\{0\}$.
	\end{enumerate}
\end{cor}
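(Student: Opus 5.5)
We will derive the corollary from Theorem \ref{appro} applied to $f=\iota_U$ and $g=\iota_V$, using the closed affine (here linear) subspace case in part 2 of Example \ref{example}. First we record the setup. For $p\in U$ we have $\partial\iota_U(p)=\N_Up=U^\perp$, independently of $p$. Hence $\partial\iota_U$ is constant on $U$ and empty off $U$, so it is locally constant at every $p\in U$; the same holds for $\partial\iota_V$ at every $q\in V$. Also $\sigma_{U^\perp}=\iota_{U^{\perp\perp}}=\iota_U$ and $\sigma_{V^\perp}(-u)=\iota_V(u)$. So for $(p,q)\in U\times V$, condition (ii) of Theorem \ref{appro} becomes
\[
\inf_{u\in U\cap V}\langle u,p-q\rangle=0 .
\]
Because $U\cap V$ is a linear subspace, this holds exactly when $\langle u,p-q\rangle=0$ for all $u\in U\cap V$, i.e.\ $p-q\in(U\cap V)^\perp$. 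We also note that the $\max$ in (i) of the corollary and the sum in the definition of approximate solutions define the same notion, since $\max\{a,b\}\le a+b\le 2\max\{a,b\}$. Finally, a single $x$ with error below $\varepsilon$ for each $\varepsilon$ gives a sequence $x_n$ (choose $\varepsilon=1/n$), and conversely a sequence gives such an $x$.

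With this, statement (i) of the corollary is equivalent to the following: for every $(p,q)\in U\times V$, $p-q\in(U\cap V)^\perp$.

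\emph{(ii)$\Rightarrow$(i).} If $U\cap V=\{0\}$, then $(U\cap V)^\perp=\mathcal H$, so the condition holds trivially for every pair. Theorem \ref{appro} then gives an approximate solution for each $(p,q)$, which is (i).

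\emph{(i)$\Rightarrow$(ii).} Take any $w\in U\cap V$ and apply the condition to the pair $(p,q)=(w,0)\in U\times V$. This gives $w\in(U\cap V)^\perp$, so $\langle w,w\rangle=0$ and hence $w=0$.

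The only step needing care is justifying that the infimum equals $0$ exactly under the orthogonality condition. If some $u\in U\cap V$ had $\langle u,p-q\rangle\neq0$, scaling $u$ drives the infimum to $-\infty$. Otherwise the infimum is $0$, attained at $u=0$. No closedness or interiority hypothesis is needed here, since Theorem \ref{appro} only requires local constancy, which we verified above.
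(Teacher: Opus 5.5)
Your proposal is correct and follows the paper's route: both directions go through Theorem \ref{appro}, via part 2 of Example \ref{example}, to reduce (i) to $\langle u,p-q\rangle=0$ for all $u\in U\cap V$ and all $(p,q)\in U\times V$, and (ii)$\Rightarrow$(i) is then immediate. The differences are cosmetic. For (i)$\Rightarrow$(ii) you test the single pair $(w,0)$ to get $\langle w,w\rangle=0$, whereas the paper uses all pairs to put $x$ in $(U+V)^\perp=U^\perp\cap V^\perp$ and hence in $U\cap U^\perp$. You also spell out the local-constancy check, the infimum computation, and the max-versus-sum and sequence-versus-single-point equivalences, all of which the paper leaves implicit.
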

\begin{proof}
	(ii)$\Rightarrow$ (i): As seen from part 2 in Example \ref{example}, if $U\cap V=\{0\}$, then (i) immediately follows since $\langle u,p-q\rangle=0$, for any $(p,q)\in U\times V$.\\
	(i)$\Rightarrow$ (ii): Since we know that $\{0\}\subseteq U\cap V$, we only have to show that $U\cap V\subseteq\{0\}$. Let $x\in U\cap V$. Invoking part 2 of Example \ref{example} with assumption (i), for each $(p,q)\in U\times V$, $\langle x,p-q\rangle=0$. This means $x\in (U-V)^\perp=(U+V)^\perp=U^\perp\cap V^\perp$. Since $U\cap U^\perp=\{0\}$ and $x\in U\cap U^\perp$, $x=0$. This concludes the proof of the corollary.    
\end{proof}

\section{The Inverse Proximal Property (IPP)}
\label{sec3}
In this section, we present a natural extension of the inverse best approximation property (IBAP), which was first coined by Combettes and Reyes \cite[Definition 1.1]{co}. Our objective here is to make analogous characterizations of the IBAP as presented in the paper \cite[Theorem 2.8]{co} in terms of proximity operator.

\begin{defn}\rm
	\label{ibap}
	Let $f,g\in\Gamma_0(\mathcal{H})$. Set $C:=\mathrm{dom\ }\partial f$ and $D:=\mathrm{dom\ }\partial g$. We say that $(C,D)$ satisfies the {\it inverse proximal property} (IPP) relative to $(f,g)$ if for any $(p,q)\in C\times D$, there exists $x\in\mathcal{H}$ such that
	$$\pr_fx=p \ \ \mathrm{and} \ \ \pr_gx=q.$$
\end{defn}

\begin{exm}\rm
	\label{exm:noibap}
	Take $\mathcal{H}=\mathbb{R}$. Consider the indicator function $f:=\iota_{[0,+\infty)}$ with the {\it rectified linear unit} activation function \cite[Example 2.6]{compes2}
	\begin{center}	$\rho:\mathbb{R}\rightarrow\mathbb{R}:x\mapsto\begin{cases}
			x, \hspace{0.5in}\mathrm{if\ }x>0\\
			0, \hspace{0.5in}\mathrm{if\ }x\leq0,
		\end{cases}$
	\end{center}
	and the {\it negative Boltzmann-Shannon entropy} \cite[Example 9.35]{ba}
	\begin{center}
		$g: \mathcal{H}\longrightarrow(-\infty,+\infty]: x \mapsto\begin{cases}
			x\ln(x)-x, \hspace{.2in} \text{if} \ \ x> 0\\
			0,\hspace{.74in} \text{if} \ \ x=0\\
			+\infty, \hspace{.59in} \text{if} \ \ x<0.
		\end{cases}$
	\end{center}
	Set $C:=\mathrm{dom\ }\partial f=[0,+\infty)$ and $D:=\mathrm{dom\ }\partial g=[0,+\infty)$. From \cite[Example 2.6]{compes}, $\pr_fx=\rho(x)$. It easy to show that
	$$\pr_gx=\begin{cases}
		\ln(x), \hspace{0.37in}\mathrm{if\ }x>0\\
		0, \hspace{0.58in}\mathrm{if\ }x\leq0.
	\end{cases}$$
\end{exm}
\noindent It can be observed that the system of equations, $\pr_fx=e$ and $\pr_gx=2$ has no solution. On the contrary, suppose there exists an $x\in\mathbb{R}$ such that $\pr_fx=e$ and $\pr_gx=2$. We then have, $\pr_fx=e$ if and only if $x=e$. Consequently, $\pr_g(e)=1$, which is a contradiction. This concludes that $(C,D)$ does not satisfy the IPP relative to $(\iota_{[0,+\infty)},g)$.

\begin{rmk}\rm
	From the Definition \ref{ibap}, if we choose $f:=\iota_C$ and $g:=\iota_D$, the pair $(C,D)$ {\it satisfies the IBAP} means $(C,D)$ satisfies the IPP relative to $(\iota_C,\iota_D)$.
\end{rmk}

One of our objectives in this section is to provide a characterization of the IPP. Analogously to the characterization provided by Combettes and Reyes \cite[Theorem 2.8]{co}, the next result allows to recover some equivalence statements from the characterization of IBAP for linear subspaces. We now present a characterization of the IPP relative to two functions, $f$ and $g$. Since the proximity operator is a natural extension of the projection operator, our characterization specializes to the classical characterization of the IBAP for linear subspaces.
\begin{prop}
	\label{ibapver2}
	Let  $f,g\in\Gamma_0(\mathcal{H})$. Set $C:=\mathrm{dom}\,\partial f$, $C^\ast:=\mathrm{dom}\,\partial f^\ast$, $D:=\mathrm{dom}\,\partial g$, and $D^\ast:=\mathrm{dom}\,\partial g^\ast$. Consider the following statements:
	\begin{enumerate}
		\item[{\rm(i)}] $\left(C,D\right)$ satisfies the IPP relative to $(f,g)$.
		\item[{\rm(ii)}] For any $p\in C$, there exists $x\in\mathcal{H}$ such that $\pr_fx=p$ and $\pr_gx=0$.
		\item[{\rm(iii)}] $\pr_f\left(D^\ast\right)=C$.
		\item[{\rm(iv)}] $\mathcal{H}=C^\ast+D^\ast-\pr_{f^\ast}\left(D^\ast\right)$.
	\end{enumerate}
	If $0\in D$, then $\rm{(i)}\Rightarrow\rm{(ii)}$, $\rm{(ii)}\Leftrightarrow\rm{(iii)}$, and $\rm{(iii)}\Rightarrow\rm{(iv)}$.
\end{prop}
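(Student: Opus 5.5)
The plan is to reduce everything to two standard facts. The first is \eqref{proxsubd}, together with its consequence that the range of $\pr_h$ is exactly $\dom\,\partial h$ for every $h\in\Gamma_0(\HH)$: indeed $\pr_hx\in\dom\,\partial h$ by \eqref{proxsubd}, and conversely $p\in\dom\,\partial h$ gives $p=\pr_h(p+u)$ for any $u\in\partial h(p)$. The second is Moreau's decomposition $x=\pr_fx+\pr_{f^\ast}x$. In particular $\pr_f(\HH)=C$ and $\pr_{f^\ast}(\HH)=C^\ast$.

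\textbf{Step (i)$\Rightarrow$(ii).} This is immediate: since $0\in D$, apply Definition \ref{ibap} to the pair $(p,0)\in C\times D$.

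\textbf{Step (ii)$\Leftrightarrow$(iii).} By \eqref{proxsubd}, $\pr_gx=0$ holds exactly when $x\in\partial g(0)$. So (ii) says precisely that $C\subseteq\pr_f(\partial g(0))$.
\begin{itemize}
\item[$\Rightarrow$:] Every $u\in\partial g(0)$ satisfies $0\in\partial g^\ast(u)$, so $\partial g(0)\subseteq D^\ast$. Combined with $\pr_f(D^\ast)\subseteq\pr_f(\HH)=C$, this yields $C\subseteq\pr_f(\partial g(0))\subseteq\pr_f(D^\ast)\subseteq C$, which is (iii).
\item[$\Leftarrow$:] Given $p\in C$, (iii) supplies $y\in D^\ast$ with $\pr_fy=p$. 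The task is then to produce a point $x\in\partial g(0)$ with $\pr_fx=p$. The first thing I would try is to move $y$ inside the fibre $p+\partial f(p)$ of $\pr_f$ over $p$, using Remark \ref{fenchelsubd} and Moreau's decomposition for $g$. Note that $\pr_gx=0$ is equivalent to $\pr_{g^\ast}x=x$, i.e.\ $x\in\mathrm{Fix}\,\pr_{g^\ast}=\partial g(0)$.
\end{itemize}
This direction is the main obstacle. In general $\partial g(0)$ is strictly smaller than $D^\ast$, so the argument must exploit the structure of the fibre $p+\partial f(p)$. If no such point can be found in general, I would record the precise set needed: the equivalence holds verbatim with $D^\ast$ replaced by $\partial g(0)=\partial g^{\ast\,-1}(0)$, and that is the form in which the two statements literally coincide.

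\textbf{Step (iii)$\Rightarrow$(iv).} The inclusion ``$\supseteq$'' is trivial. For ``$\subseteq$'', fix $z\in\HH$ and set $p:=\pr_fz\in C$. By (iii) there is $y\in D^\ast$ with $\pr_fy=p$. Moreau's decomposition applied to $z$ and to $y$ gives
$$z=\pr_{f^\ast}z+p=\pr_{f^\ast}z+y-\pr_{f^\ast}y.$$
Here $\pr_{f^\ast}z\in\pr_{f^\ast}(\HH)=\dom\,\partial f^\ast=C^\ast$, $y\in D^\ast$, and $\pr_{f^\ast}y\in\pr_{f^\ast}(D^\ast)$. Hence $z\in C^\ast+D^\ast-\pr_{f^\ast}(D^\ast)$, which proves (iv).
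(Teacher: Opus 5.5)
\textbf{Gap: (iii)$\Rightarrow$(ii) is left unproved, and it is in fact false.} Your arguments for (i)$\Rightarrow$(ii), (ii)$\Rightarrow$(iii) and (iii)$\Rightarrow$(iv) are correct and are essentially the paper's. Your (iii)$\Rightarrow$(iv) is the same Moreau-decomposition computation, without the intermediate identity $\mathcal{H}=C+C^\ast$. The remaining direction cannot be closed by any fibre argument. Take $\mathcal{H}=\mathbb{R}$, $f=0$ and $g=\mathfrak{q}$:
\begin{itemize}
\item $\mathrm{Prox}_f=\mathrm{Id}$, so $C=\mathbb{R}$.
\item $\mathrm{Prox}_g=\tfrac12\mathrm{Id}$, so $D=\mathbb{R}\ni 0$.
\item $g^\ast=\mathfrak{q}$, so $D^\ast=\mathbb{R}$.
\end{itemize}
Hence $\mathrm{Prox}_f(D^\ast)=\mathbb{R}=C$ and (iii) holds. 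But (ii) fails at $p=1$, since it would require $x=1$ and $x/2=0$. In the terms of your proposed repair: the fibre $p+\partial f(p)=\{p\}$ never meets $\partial g(0)=\{0\}$ when $p\neq0$. So no use of Remark~\ref{fenchelsubd} or Moreau's decomposition can move $y$ into $\partial g(0)$.

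\textbf{Where the paper's proof goes wrong.} The paper's argument for (iii)$\Rightarrow$(ii) asserts ``since $x\in D^\ast$, $0\in\partial g^\ast(x)$''. This is exactly the conflation you spotted. Membership $x\in\mathrm{dom}\,\partial g^\ast$ only says $\partial g^\ast(x)\neq\varnothing$. By contrast, $0\in\partial g^\ast(x)$ means $x\in(\partial g^\ast)^{-1}(0)=\partial g(0)=\mathrm{Fix}\,\mathrm{Prox}_{g^\ast}$, which is in general a strictly smaller set.

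\textbf{What to do instead.} Rather than searching for a proof, present the counterexample and record the corrected statement. Replace (iii) by $\mathrm{Prox}_f(\partial g(0))=C$. Your own observation that (ii) says $C\subseteq\mathrm{Prox}_f(\partial g(0))$ then gives (ii)$\Leftrightarrow$(iii) in both directions. This stronger (iii) still implies the original (iii), because $\partial g(0)\subseteq D^\ast$, and hence still implies (iv).

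\textbf{Consequences elsewhere.} The two versions of (iii) coincide when $\partial g(0)=\mathrm{dom}\,\partial g^\ast$. For example, with $g=\iota_V$ and $V$ a closed linear subspace, both sets equal $V^\perp$. This is why the recovery of the Combettes--Reyes subspace characterization is unaffected. However, the same pair $f=0$, $g=\mathfrak{q}$ satisfies hypotheses (a) and (b) of Theorem~\ref{ibapcharac}: both proximity operators are linear and $C\cap C^\ast=\mathbb{R}\cap\{0\}=\{0\}$. In that example (iii) holds while (i) fails, so that theorem inherits the same error.
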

\begin{proof}
	(i)$\Rightarrow$(ii): For any $p\in C$, $(p,0)\in C\times D$. Invoking assumption (i), the statement (ii) immediately holds.\\
	(ii)$\Rightarrow$(iii): Observe that $\pr_f\left(D^\ast\right)\subseteq \mathrm{ran}\left(\pr_f(\mathcal{H})\right)=C$. To show the other inclusion, let $p\in C$. By (ii), there exists $x\in\mathcal{H}$ such that $\pr_fx=p$ and $\pr_gx=0$. In view of \eqref{proxsubd}, $\pr_gx=0$ if and only if $x\in\partial g(0)$. Moreover, with the aid of \cite[Proposition 16.10]{ba}, $0\in\partial g^\ast(x)$. This means $x\in\mathrm{dom}\,\partial g^\ast$. Thus, there exists $x\in\mathcal{H}$ such that $p=\pr_fx\in \pr_f\left(D^\ast\right).$\\
	(iii)$\Rightarrow$(ii): Let $p\in C$. Then by (iii), there exists $x\in D^\ast$ such that $p=\pr_f x$. Since $x\in D^\ast$, $0\in\partial g^\ast(x)$. Consequently, $x\in\partial g(0)$, and yields, $\pr_gx=0$.\newline
	(iii)$\Rightarrow$(iv): By Moreau's decomposition and with the fact stated in \cite[Eq. (24.3)]{ba}, $$\mathcal{H}=\pr_f(\mathcal{H})+\pr_{f^\ast}(\mathcal{H})=C+C^\ast.$$
	But with the assumption (iii), $\mathcal{H}= \pr_f\left(D^\ast\right)+C^\ast.$
	
	Meanwhile, we already know that $C^\ast+D^\ast-\pr_{f^\ast}\left(D^\ast\right)\subseteq\mathcal{H}$. We end the proof by showing $\mathcal{H}\subseteq C^\ast+D^\ast-\pr_{f^\ast}\left(D^\ast\right)$. Now, let $x\in\mathcal{H}$. Then, we may write $x$ as follows
	\begin{center}
		$x=\pr_fy+z$, for some $y\in D^\ast$ and for some $z\in C^\ast$.
	\end{center}
	Applying Moreau's decomposition on $y$, we get
	$$x=y-\pr_{f^\ast}y+z\in D^\ast-\pr_{f^\ast}\left(D^\ast\right)+C^\ast.$$
\end{proof}

As a remark, the conditional statement (ii) $\Rightarrow$ (i) in Proposition \ref{ibapver2} is not necessarily true. To show this, let $\mathcal{H}=\RR$ and consider the negative rectified linear unit activation function define as
$$\varphi(x)=\begin{cases}
	0, \hspace{0.5in}\mathrm{if}\ x\geq 0\\
	x, \hspace{0.5in}\mathrm{if}\ x< 0.
\end{cases}
$$
It can be checked that $\pr_fx=\varphi(x)$, where $f:=\iota_{(-\infty,0]}$. Take the function $g$ as the negative Boltzmann-Shannon entropy. Then, $C=(-\infty,0]$ and $D=[0,+\infty)$. For each $p\in C$, the system of nonlinear equations $\pr_fx=p$ and $\pr_gx=0$ has a solution.
We claim that $(C,D)$ has no IPP relative to $f$ and $g$. For instance, consider the system of equations
\begin{center}
	$\pr_fx=-1$ and $\pr_gx=1$.
\end{center}
Note that $\pr_fx=-1$ implies $x=-1$. But $\pr_g(-1)=0$, a contradiction. Thus, the system of equations, $\pr_fx=-1$ and $\pr_gx=1$ has no solution. Whereas, a counterexample for the conditional statement (iv) $\Rightarrow$ (iii) in Proposition \ref{ibapver2} can be seen from \cite[Example 2.5]{robval}. The authors considered a system involving projection operators onto convex cones.

Having developed the necessary preliminary results, we now proceed to characterize the IPP. To establish such characterization, linearity of proximity operator is a sufficient condition. This generalizes the result presented in the papers \cite[Corollary 2.12]{co} and \cite[Proposition 2.4]{robval}.

\begin{thm}
	\label{ibapcharac}
	Let  $f,g\in\Gamma_0(\mathcal{H})$. $C:=\mathrm{dom}\,\partial f$, $C^\ast:=\mathrm{dom}\,\partial f^\ast$, $D:=\mathrm{dom}\,\partial g$, and $D^\ast:=\mathrm{dom}\,\partial g^\ast$. Suppose the following holds:
	\begin{enumerate}
		\item[\rm{(a)}] $C\cap C^\ast=\{0\}$ and $0\in D$.
		\item[\rm{(b)}] $\pr_f$ and $\pr_g$, are linear operators.
	\end{enumerate}
	Then the following statements are equivalent:
	\begin{enumerate}
		\item[\rm{(i)}] $\left(C,D\right)$ satisfies IPP relative to $(f,g)$.
		\item[\rm{(ii)}] For any $p\in C$, there exists $x\in\mathcal{H}$ such that $\pr_fx=p$ and $\pr_gx=0$.
		\item[\rm{(iii)}] $\pr_f\left(D^\ast\right)=C$.
		\item[\rm{(iv)}] $\mathcal{H}=C^\ast+D^\ast-\pr_{f^\ast}\left(D^\ast\right)$.
	\end{enumerate}
\end{thm}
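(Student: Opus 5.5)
The plan is to use Proposition~\ref{ibapver2}. Since hypothesis (a) gives $0\in D$, it already supplies (i)$\Rightarrow$(ii), (ii)$\Leftrightarrow$(iii) and (iii)$\Rightarrow$(iv). It therefore remains to prove (ii)$\Rightarrow$(i) and (iv)$\Rightarrow$(iii), and this is where hypothesis (b) and the first half of (a) come in.

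\emph{Preliminary step.} First I record the structural consequences of (b). As in the proof of Proposition~\ref{ibapver2}, we have $C=\mathrm{dom}\,\partial f=\pr_f(\HH)$ and likewise $D=\pr_g(\HH)$. By Moreau's decomposition, $\pr_{f^\ast}=\mathrm{Id}-\pr_f$ and $\pr_{g^\ast}=\mathrm{Id}-\pr_g$, so $C^\ast=(\mathrm{Id}-\pr_f)(\HH)$ and $D^\ast=(\mathrm{Id}-\pr_g)(\HH)$. Since $\pr_f$ and $\pr_g$ are linear, all four sets $C,C^\ast,D,D^\ast$ are ranges of linear operators. Hence they are linear subspaces, and $\pr_{f^\ast},\pr_{g^\ast}$ are linear as well.

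\emph{(ii)$\Rightarrow$(i).} Take $(p,q)\in C\times D$. Since $D=\pr_g(\HH)$, choose $y$ with $\pr_g y=q$. Because $C$ is a subspace, $p':=p-\pr_f y$ lies in $C$. By (ii) there is $x'$ with $\pr_f x'=p'$ and $\pr_g x'=0$. Set $x:=x'+y$. Linearity gives
\[
\pr_f x=p'+\pr_f y=p \quad\text{and}\quad \pr_g x=0+q=q,
\]
which is (i).

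\emph{(iv)$\Rightarrow$(iii).} The inclusion $\pr_f(D^\ast)\subseteq C$ is trivial, so take $p\in C$. By (iv), write $p=c^\ast+d-\pr_{f^\ast}d$ with $c^\ast\in C^\ast$ and $d\in D^\ast$. Moreau's decomposition gives $d-\pr_{f^\ast}d=\pr_f d$, so $p-\pr_f d=c^\ast$. The left side lies in $C$, since $C$ is a subspace containing both $p$ and $\pr_f d$, and the right side lies in $C^\ast$. Hence $c^\ast\in C\cap C^\ast=\{0\}$ by (a), so $p=\pr_f d\in\pr_f(D^\ast)$.

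\emph{Main obstacle.} The only delicate point is the preliminary step: justifying that $C$ and $C^\ast$ are exactly the ranges of $\pr_f$ and $\pr_{f^\ast}$, and hence subspaces. This is what lets differences of elements stay inside $C$ in both implications. I would cite \cite[Eq.~(24.3)]{ba} together with Moreau's decomposition, exactly as in the proof of Proposition~\ref{ibapver2}.
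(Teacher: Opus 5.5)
Your overall route is the paper's: you get (i)$\Rightarrow$(ii), (ii)$\Leftrightarrow$(iii) and (iii)$\Rightarrow$(iv) from Proposition~\ref{ibapver2}, and your argument for (ii)$\Rightarrow$(i) is the same translation trick the paper uses. There is, however, one step in (iv)$\Rightarrow$(iii) that does not follow as written. The set $C^\ast+D^\ast-\mathrm{Prox}_{f^\ast}(D^\ast)$ is a Minkowski sum/difference, since the paper defines $A+B$ and $G-F$ elementwise. So (iv) only gives
\[
p=c^\ast+d-\mathrm{Prox}_{f^\ast}d', \qquad c^\ast\in C^\ast,\quad d,d'\in D^\ast,
\]
with $d$ and $d'$ independent. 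You silently took $d'=d$. In effect you read $D^\ast-\mathrm{Prox}_{f^\ast}(D^\ast)$ as $(\mathrm{Id}-\mathrm{Prox}_{f^\ast})(D^\ast)=\mathrm{Prox}_f(D^\ast)$, which can be a strictly smaller set. Under that reading, hypothesis (iv) would be stronger than the one stated.

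The repair is one line and uses only facts from your preliminary step. By Moreau's decomposition, $\mathrm{Prox}_f d=d-\mathrm{Prox}_{f^\ast}d$, so
\[
p-\mathrm{Prox}_f d=c^\ast+\mathrm{Prox}_{f^\ast}(d-d').
\]
This lies in $C^\ast$, because $\mathrm{Prox}_{f^\ast}$ is linear with range $C^\ast$ and $C^\ast$ is a linear subspace. Since $p-\mathrm{Prox}_f d$ also lies in the subspace $C$, hypothesis (a) gives $p=\mathrm{Prox}_f d\in\mathrm{Prox}_f(D^\ast)$.

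This is exactly the paper's computation. Writing $c^\ast=\mathrm{Prox}_{f^\ast}x$ with two elements $y,z\in D^\ast$, it obtains $p-\mathrm{Prox}_f y=\mathrm{Prox}_{f^\ast}(x+y-z)\in C^\ast$. Equivalently, under (b) the same identity shows $C^\ast+D^\ast-\mathrm{Prox}_{f^\ast}(D^\ast)=C^\ast+\mathrm{Prox}_f(D^\ast)$, which is what justifies your representation of $p$ after the fact. With that line added, your proof is complete.
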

\begin{proof}
	From the previous theorem, we are only left to show the two implications, (ii) $\Rightarrow$ (i) and (iv) $\Rightarrow$ (iii) of Proposition \ref{ibapver2}.\\
	(ii) $\Rightarrow$ (i): Let $(p,q)\in C\times D$. Then, there exist $x,y\in\mathcal{H}$ such that $p=\pr_fx$ and $q=\pr_gy$. By the linearity of proximity operators, $p-\pr_fy= \pr_f x-\pr_fy=\pr_f\left(x-y\right)\in C.$ Using assumption (ii) corresponding to $p-\pr_fy$, we can find a vector $z\in\mathcal{H}$ for which $\pr_fz= p-\pr_fy$ and $\pr_gz=0.$
	As we can see
	\begin{center}
		$\pr_f(z+y)= \pr_fz+\pr_fy=p$ and $\pr_g(z+y)= \pr_gz+\pr_gy=q$.
	\end{center}
	This shows that $z+y$ is a solution to the system $\pr_fx=p$ and $\pr_gx=q$. Since $p$ and $q$ are chosen arbitrarily, $(C,D)$ satisfies the IPP relative to $(f,g)$.\\
	(iv) $\Rightarrow$ (iii): It is known that $\pr_f\left(D^\ast\right)\subseteq C$. It remains to show that $C\subseteq\pr_f\left(D^\ast\right)$. Let $p\in C$. By assumption (iii),
	\begin{center}
		$p=q+y-\pr_{f^\ast}z$, for some $q\in C^\ast$ and for some $y,z\in D^\ast$.
	\end{center}
	Since $q\in C^\ast$, $q=\pr_{f^\ast}x$, for some $x\in\mathcal{H}$. Take note that by Moreau's decomposition and by linearity of $\pr_{f^\ast}$,
	$$p-\pr_fy=\pr_{f^\ast}x+\pr_{f^\ast}y-\pr_{f^\ast}z=\pr_{f^\ast}(x+y-z)\in C^\ast.$$
	But one can checked that $p-\pr_fy\in C$. This yields to, $p-\pr_fy\in C\cap C^\ast=\{0\}$ and showing that $p=\pr_fy\in\pr_f\left(D^\ast\right)$.
\end{proof}

A classical example of a linear proximity operator is the projection operator onto a closed linear subspace. As a result, Theorem \ref{ibapcharac} plays an important role in the recovery of the characterization of the IBAP for linear subspaces, as discussed in \cite{co}. We shall see this in the following corollary.

\begin{cor}\rm
	\label{IBAPlinear}
	Let $U$ and $V$ be nonempty closed linear subspaces of $\mathcal{H}$. Recall that $\p_U$ and $\p_V$ are linear operators if and only if $U$ and $V$ are both linear subspaces. In line with the Theorem \ref{ibapcharac}, the following statements are equivalent:
	\begin{enumerate}
		\item[(i)] $\left(U,V\right)$ satisfies the IBAP.
		\item[(ii)] For any $p\in U$, there exists $x\in\mathcal{H}$ such that $\p_Ux=p$ and $\p_Vx=0$.
		\item[(iii)] $\p_U\left(V^\perp\right)=U$.
		\item[(iv)] $\mathcal{H}=U^\perp+V^\perp-\p_{U^\perp}(V^\perp)$.
	\end{enumerate}
	But (iv) can be further deduced as follows, $\mathcal{H}=U^\perp+V^\perp-\p_{U^\perp}(V^\perp)=U^\perp+V^\perp$.
	Thus, we recover the first four equivalent statements in the characterizations of the IBAP for linear subspaces \cite[Corollary 2.12]{co}.
\end{cor}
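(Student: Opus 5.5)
The plan is to derive Corollary \ref{IBAPlinear} from Theorem \ref{ibapcharac} with $f:=\iota_U$ and $g:=\iota_V$, and then simplify (iv) separately.

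\emph{Step 1: identify the objects.} Since $\pr_{\iota_U}=\p_U$ and $\partial\iota_U=\N_U$, we have $C=\dom\,\N_U=U$. Next, $\iota_U^\ast=\sigma_U=\iota_{U^\perp}$, because the support function of a linear subspace is $0$ on $U^\perp$ and $+\infty$ elsewhere. Hence $C^\ast=\dom\,\partial\iota_{U^\perp}=U^\perp$ and $\pr_{f^\ast}=\p_{U^\perp}$. In the same way, $D=V$ and $D^\ast=V^\perp$.

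\emph{Step 2: check hypotheses (a) and (b).} For (a), $C\cap C^\ast=U\cap U^\perp=\{0\}$, and $0\in V=D$ because $V$ is a linear subspace. For (b), the projection onto a closed linear subspace is linear; for instance, it is characterized by $x-\p_Ux\in U^\perp$, and this condition is preserved under linear combinations. Theorem \ref{ibapcharac} then gives the equivalence of (i)--(iv) verbatim. Statement (i) becomes the IBAP by the Remark following Example \ref{exm:noibap}.

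\emph{Step 3: simplify (iv).} I must show $U^\perp+V^\perp-\p_{U^\perp}(V^\perp)=U^\perp+V^\perp$.
\begin{itemize}
\item For $\supseteq$, take $0\in V^\perp$. Then $\p_{U^\perp}0=0$, so every $a+b$ with $a\in U^\perp$, $b\in V^\perp$ can be written as $a+b-\p_{U^\perp}0$.
\item For $\subseteq$, note that $\p_{U^\perp}(V^\perp)\subseteq U^\perp$. Since $U^\perp$ is a subspace, $-\p_{U^\perp}(V^\perp)\subseteq U^\perp$, and therefore $U^\perp+V^\perp-\p_{U^\perp}(V^\perp)\subseteq U^\perp+U^\perp+V^\perp=U^\perp+V^\perp$.
\end{itemize}
This recovers the four statements of \cite[Corollary 2.12]{co}.

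The step I expect to be the main obstacle is the clean identification $\iota_U^\ast=\iota_{U^\perp}$, together with the domains of the subdifferentials. It is routine once one observes that $\sup_{u\in U}\langle u,x\rangle$ is $0$ when $x\perp U$ and $+\infty$ otherwise. Everything else is bookkeeping.
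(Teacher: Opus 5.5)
Your argument follows the paper's route: you specialize Theorem~\ref{ibapcharac} to $f=\iota_U$, $g=\iota_V$ (so $C=U$, $C^\ast=U^\perp$, $D=V$, $D^\ast=V^\perp$, $\mathrm{Prox}_{f^\ast}=\mathrm{P}_{U^\perp}$), verify (a) and (b), and then collapse (iv). Your identifications are correct. Your two-inclusion argument for $U^\perp+V^\perp-\mathrm{P}_{U^\perp}(V^\perp)=U^\perp+V^\perp$ supplies the justification that the paper only asserts.

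The weak point, which you inherit from the paper, is citing Theorem~\ref{ibapcharac} ``verbatim''. As stated, that theorem is false, so it cannot serve as a black box. On any $\mathcal{H}\neq\{0\}$, take $f=0$ and $g=\mathfrak q$. Then $\mathrm{Prox}_f=\mathrm{Id}$ and $\mathrm{Prox}_g=\tfrac12\mathrm{Id}$ are linear, $C=D=D^\ast=\mathcal{H}$, and $C^\ast=\{0\}$. So (a), (b), (iii) and (iv) all hold, yet (ii) fails for every $p\neq 0$, since it forces $x=p$ and $x/2=0$.

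The faulty link is (iii)$\Rightarrow$(ii) in Proposition~\ref{ibapver2}. From $x\in D^\ast=\mathrm{dom}\,\partial g^\ast$ it concludes $0\in\partial g^\ast(x)$, i.e.\ $x\in\partial g(0)$, i.e.\ $\mathrm{Prox}_gx=0$. In general only the inclusion $\partial g(0)\subseteq D^\ast$ holds.

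In your setting the two sets coincide: $\partial\iota_V(0)=\mathrm{N}_V(0)=V^\perp=D^\ast$. So the repair is one line. If $p=\mathrm{P}_Ub$ with $b\in V^\perp$, then $\mathrm{P}_Vb=0$, which gives (iii)$\Rightarrow$(ii) directly. The other implications you import are sound here: (i)$\Rightarrow$(ii)$\Rightarrow$(iii)$\Rightarrow$(iv), and the linearity arguments for (ii)$\Rightarrow$(i) and (iv)$\Rightarrow$(iii). With that line added, your proof is complete.
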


We end the section by providing a linear proximity operator, which need not be a projection operator.
\begin{exm}\rm
	In \cite[Example 13.6]{ba}, $\mathfrak{q}^\ast=\mathfrak{q}$. From Moreau's decomposition, $\pr_{\mathfrak{q}}=\dfrac{1}{2}\mathrm{Id}$ which further means that $\pr_{\mathfrak{q}}$ is a linear proximity operator. Another interesting linear proximity operator is when we consider the quadratic function $f(x)=\dfrac{1}{2}x^\top Qx+b^\top x$, where $Q$ is a positive semi-definite matrix. That is, $\pr_f x= (I+Q)^{-1}(x-b)$.
\end{exm}

\section{Applications}
\label{sec4}
\subsection{Feasibility Problem}
\label{secfeas}
\ \indent Let $m\in\mathbb{N}$. Given a family of closed convex sets $(C_i)_{1\leq i\leq m}$ of $\mathcal{H}$, the convex feasibility problem \cite{convexfeas} consists of finding $x\in\displaystyle\bigcap^m_{i=1}C_i$. In applications, this problem arises in constrained optimization problem where we want to find a point which satisfies the constraints. This problem has wide applicability in physical science, economics, prediction theory, image reconstruction, and signal processing. In \cite{ba2}, the authors showed that whenever $A\cap B\neq\emptyset$, it is necessary that $A\cap B$ is precisely the projection of $\mathrm{Fix}\ T$ onto $B$, where $T=\dfrac{1}{2}\left(R_AR_B+\mathrm{Id}\right)$ is the average reflectors with respect to $A$ and $B$ \cite[Corollary 3.9]{ba2}. In \cite{feasibility}, the authors proposed a new algorithm for solving convex feasibility problems, in which randomly selected blocks of subgradient projectors are activated in parallel at each iteration and combined through an extrapolation-based averaging process. They demonstrated the effectiveness of their method through numerical experiments in signal and image recovery.

The next lemma will aid us in providing a characterization for the convex feasibility problem to admit a solution.

\begin{lem}
	\label{subdsup0}
	\emph{\cite[Example 16.34]{ba}} Let $C$ be a nonempty closed convex subset of $\mathcal{H}$. Then $\partial\sigma_C(0)=C$.
\end{lem}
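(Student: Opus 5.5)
The plan is to prove the two inclusions $\partial\sigma_C(0)\subseteq C$ and $C\subseteq\partial\sigma_C(0)$ straight from the definition of the subdifferential. Since $C$ is nonempty, closed and convex, $\sigma_C\in\Gamma_0(\HH)$ and $\sigma_C(0)=\sup_{c\in C}\langle c,0\rangle=0$. By the definition of $\partial$, a vector $u$ lies in $\partial\sigma_C(0)$ exactly when
\begin{equation*}
(\forall y\in\HH)\ \langle y,u\rangle\leq\sigma_C(y).
\end{equation*}

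For $C\subseteq\partial\sigma_C(0)$, take $u\in C$. Then for every $y\in\HH$ we have $\langle y,u\rangle\leq\sup_{c\in C}\langle c,y\rangle=\sigma_C(y)$, which is precisely the inequality above.

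For the reverse inclusion I will argue by contradiction, using the separation theorem for closed convex sets in $\HH$ (the projection theorem, \cite[Theorem 3.16]{ba}, is enough). Suppose $u\in\partial\sigma_C(0)$ but $u\notin C$. Set $y:=u-\p_Cu\neq0$. The projection characterization gives $\langle c-\p_Cu,y\rangle\leq0$ for all $c\in C$, so
\begin{equation*}
\sigma_C(y)\leq\langle\p_Cu,y\rangle=\langle u,y\rangle-\Vert y\Vert^2<\langle u,y\rangle .
\end{equation*}
This violates $\langle y,u\rangle\leq\sigma_C(y)$, so $u\in C$.

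Alternatively, one can use that $\sigma_C^\ast=\iota_C$ (recalled earlier via \cite[Example 13.43(i)]{ba}) together with the Fenchel–Young characterization: $u\in\partial\sigma_C(0)$ if and only if $\sigma_C(0)+\sigma_C^\ast(u)=\langle 0,u\rangle$, that is, $\iota_C(u)=0$, that is, $u\in C$.

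The only step with real content is the separation in the second inclusion; this is exactly where closedness and convexity of $C$ are needed. The rest is unwinding definitions.
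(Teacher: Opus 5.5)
Your proof is correct, in both versions. The paper gives no argument of its own beyond the citation to Bauschke--Combettes (Example 16.34).

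The standard proof of that fact uses conjugate duality. Since $\sigma_C=\iota_C^\ast$ with $\iota_C\in\Gamma_0(\mathcal{H})$, one has $\partial\sigma_C=(\partial\iota_C)^{-1}=\mathrm{N}_C^{-1}$. Hence $\partial\sigma_C(0)=\{x\in\mathcal{H}\mid 0\in\mathrm{N}_Cx\}=C$. This is essentially your Fenchel--Young alternative.

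Your main argument takes a genuinely different and more elementary route, since it needs only the projection theorem. It makes explicit the separation step that the duality route hides inside the Fenchel--Moreau identity $\sigma_C^\ast=\iota_C^{\ast\ast}=\iota_C$. You therefore see exactly where closedness and convexity are used. For an arbitrary nonempty $C$, since $\sigma_C=\sigma_{\overline{\mathrm{conv}}\,C}$, the same reasoning gives $\partial\sigma_C(0)=\overline{\mathrm{conv}}\,C$.

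The duality route, in exchange, is a one-line consequence of existing machinery. It is a special case of the general identity $\partial f^\ast(0)=(\partial f)^{-1}(0)=\mathrm{Argmin}\,f$ for $f\in\Gamma_0(\mathcal{H})$, applied to $f=\iota_C$.
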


\begin{prop}
	\label{convfeas}
	Let $C$ and $D$ be two nonempty closed convex sets of $\mathcal{H}$. Set
	\begin{center}
		$G:=\{u\in\mathcal{H}\vert\ \sigma_{-D}(u)<+\infty\}$ and $F:=\{u\in\mathcal{H}\vert\ \sigma_{C}(u)<+\infty\}$.
	\end{center}
	If $G-F$ is a closed linear subspace of $\HH$ then
	\begin{center}
		$C\cap D\neq \varnothing$ if and only if $\displaystyle\inf_{u\in\mathcal{H}}\sigma_{C-D}(u)=0$.
	\end{center}
\end{prop}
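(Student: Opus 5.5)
The plan is to reduce Proposition \ref{convfeas} to Corollary \ref{lem1} by choosing $f:=\sigma_C$ and $g:=\sigma_D$, which lie in $\Gamma_0(\HH)$ because $C$ and $D$ are nonempty, closed and convex. We take the proximal points $p=q=0$. Since $\sigma_C(0)=0$ is finite and $\partial\sigma_C(0)=C\neq\varnothing$ by Lemma \ref{subdsup0}, we have $0\in\dom\,\partial f$; likewise $\partial g(0)=D$ gives $0\in\dom\,\partial g$. By Proposition \ref{exist}, a solution $x$ of $\pr_f x=0$ and $\pr_g x=0$ exists if and only if $x\in(0+\partial f(0))\cap(0+\partial g(0))=C\cap D$. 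Hence $C\cap D\neq\varnothing$ is exactly statement (i) of Corollary \ref{lem1}.

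Next we translate the qualification condition. In Corollary \ref{lem1} the sets are $\dom(\sigma_{-\partial g(0)})=\dom\sigma_{-D}=G$ and $\dom(\sigma_{\partial f(0)})=\dom\sigma_C=F$. The hypothesis that $G-F$ is a closed linear subspace is one of the admissible replacements for $G\cap\mathrm{int}(F)\neq\varnothing$ listed in the remark after Corollary \ref{lem1}, via \cite[Proposition 6.19]{ba}. So Corollary \ref{lem1} applies. This is the one point to check carefully: the remark is phrased for the corollary's $G,F$, and we must make sure they coincide with the sets defined in the proposition, including the sign convention for $-D$.

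It then remains to rewrite statement (ii). With $p=q=0$, it reads $\inf_{u}\bigl(\sigma_C(u)+\sigma_D(-u)\bigr)=0$. Since $\sigma_D(-u)=\sigma_{-D}(u)$, Lemma \ref{lemsup} gives $\sigma_C(u)+\sigma_{-D}(u)=\sigma_{C+(-D)}(u)=\sigma_{C-D}(u)$. Statement (ii) is therefore exactly $\inf_{u}\sigma_{C-D}(u)=0$, and the equivalence follows.

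I expect no substantial obstacle. The only care needed is the bookkeeping of signs, namely $\sigma_D(-u)=\sigma_{-D}(u)$, and justifying the substitute constraint qualification.
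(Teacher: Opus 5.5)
Your proposal is correct and is essentially the paper's own proof. Both specialize Corollary~\ref{lem1} to $f=\sigma_C$, $g=\sigma_D$, $p=q=0$, using Lemma~\ref{subdsup0} to identify the solution set with $C\cap D$ and the corollary's $G,F$ with those of the proposition. Both then invoke the closed-subspace qualification from the remark after that corollary and rewrite (ii) via Lemma~\ref{lemsup}. On the point you flagged: the paper handles the substitute qualification exactly as you do. Strictly, that substitution rests on Fenchel--Rockafellar duality under $0\in\mathrm{sri}(G-F)$, not on the cited Proposition~6.19 alone, but the conclusion is valid.
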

\begin{proof}
	With the use of Lemma \ref{subdsup0}, $C\cap D=\partial\sigma_C(0)\cap\partial\sigma_D(0)$. In addition to (\ref{proxsubd}), we have $x\in C\cap D$ if and only if $\pr_{\sigma_{C}}x=0$ and $\pr_{\sigma_{D}}x=0$.
	In view of Corollary \ref{lem1}, take $f=\sigma_C$ and $g=\sigma_D$. Since $C\neq\varnothing$ and $D\neq\varnothing$, $0\in\mathrm{dom}\,\partial f\cap \mathrm{dom}\,\partial g$. It follows from Corollary \ref{lem1}, if $G-F$ is a closed linear subspace, then
	$$\pr_{\sigma_{C}}x=0\ \mathrm{and}\ \pr_{\sigma_{D}}x=0 \Leftrightarrow \inf_{u\in\mathcal{H}}\left(\sigma_C(u)+\sigma_{-D}(u)\right)=0.$$
	Therefore, invoking Lemma \ref{lemsup} yields, $C\cap D\neq \varnothing$ if and only if $\displaystyle\inf_{u\in\mathcal{H}}\sigma_{C-D}(u)=0$.    
\end{proof}

\noindent The Proposition \ref{convfeas} guarantees the existence of $x\in C\cap D$. To find such $x$ numerically, we may consider the periodic projection method (also known as POCS algorithm) discussed in \cite[Corollary 5.26]{ba}.
Here is an algorithm for two convex sets:\bigskip

\begin{algorithm}[H]
	\KwData{Given convex sets $C$ and $D$}
	\KwResult{Construct a sequence $(x_n)_{n\in\mathbb{N}}$ such that $x_n\rightharpoonup x\in C\cap D$} ($x_n$ converges weakly to $x$) initialize $x_0$\;
	\For{$n=0,1,2,...$}{
		$x_{n+1}=\p_C\p_Dx_n$\;
	}
\end{algorithm}
\bigskip

\noindent Take note that whenever $C$ and $D$ are either nonempty closed convex cones or nonempty closed linear subspaces, $C\cap D\neq\varnothing$. But for two nonempty closed affine subspaces $X$ and $Y$, $X\cap Y\neq\varnothing$ may fail. In \cite[Proposition 2.10]{co}, $(\widehat{X}^\perp, \widehat{Y}^\perp)$ has IBAP is a sufficient condition for the affine feasibility problem to have a solution. As a corollary to our Proposition \ref{convfeas}, we present a necessary and sufficient condition when an affine feasibility problem admits a solution.

\begin{cor}
	Let $X$ and $Y$ be nonempty closed affine subspaces of $\mathcal{H}$ such that $X=a+\widehat{X}$ and $Y=b+\widehat{Y}$, for some $(a,b)\in X\times Y$. If $\widehat{X}^\perp+\widehat{Y}^\perp$ is closed in $\HH$ then 
	\begin{center}
		$X\cap Y\neq\varnothing$ if and only if $a-b\in\widehat{X}+\widehat{Y}$.
	\end{center}
\end{cor}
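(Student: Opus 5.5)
We will derive the corollary from Proposition \ref{convfeas} with $C:=X$ and $D:=Y$.

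The first step is to compute the support functions. Since $X=a+\widehat{X}$, we have $\sigma_X(u)=\langle a,u\rangle+\sigma_{\widehat{X}}(u)$. Because $\widehat{X}$ is a linear subspace, $\sigma_{\widehat{X}}=\iota_{\widehat{X}^\perp}$. Hence $F=\dom\sigma_X=\widehat{X}^\perp$. In the same way, $\sigma_{-Y}(u)=\langle -b,u\rangle+\iota_{\widehat{Y}^\perp}(u)$, because $-\widehat{Y}=\widehat{Y}$. Thus $G=\widehat{Y}^\perp$. Then $G-F=\widehat{Y}^\perp-\widehat{X}^\perp=\widehat{X}^\perp+\widehat{Y}^\perp$. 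This set is a linear subspace, and it is closed by hypothesis. So the qualification condition of Proposition \ref{convfeas} holds, and
\begin{center}
$X\cap Y\neq\varnothing$ if and only if $\displaystyle\inf_{u\in\mathcal{H}}\sigma_{X-Y}(u)=0$.
\end{center}

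The second step is to evaluate the infimum. By Lemma \ref{lemsup},
$$\sigma_{X-Y}(u)=\langle a-b,u\rangle+\iota_{\widehat{X}^\perp\cap\widehat{Y}^\perp}(u).$$
Taking $u=0$ gives the value $0$, so the infimum is at most $0$. Since $\widehat{X}^\perp\cap\widehat{Y}^\perp$ is a linear subspace, the infimum equals $0$ exactly when $\langle a-b,u\rangle=0$ for all $u\in\widehat{X}^\perp\cap\widehat{Y}^\perp$. Otherwise we could scale $u$ and make the infimum $-\infty$. The condition is therefore equivalent to
$$a-b\in\left(\widehat{X}^\perp\cap\widehat{Y}^\perp\right)^\perp=\left((\widehat{X}+\widehat{Y})^\perp\right)^\perp=\overline{\widehat{X}+\widehat{Y}}.$$

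The last step, and the main obstacle, is removing the closure: we must show $\widehat{X}+\widehat{Y}$ is closed. By the closed range theorem for pairs of subspaces (\cite[Corollary 15.35]{ba}, or Deutsch's theorem), $\widehat{X}^\perp+\widehat{Y}^\perp$ is closed if and only if $\widehat{X}+\widehat{Y}$ is closed. Our hypothesis is the former, so $\overline{\widehat{X}+\widehat{Y}}=\widehat{X}+\widehat{Y}$. This yields the stated equivalence.

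As a sanity check on the direction $(\Leftarrow)$, it can also be verified directly. If $a-b=v-w$ with $v\in\widehat{X}$ and $w\in\widehat{Y}$, then $a-v=b-w$. This point lies in both $X$ and $Y$, so $X\cap Y\neq\varnothing$.
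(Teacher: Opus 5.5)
Your proof is correct and follows the same route as the paper. Both apply Proposition~\ref{convfeas} with $C=X$, $D=Y$, identify $G-F=\widehat{X}^\perp+\widehat{Y}^\perp$, and reduce the question to whether $\inf_{u\in\widehat{X}^\perp\cap\widehat{Y}^\perp}\langle a-b,u\rangle=0$.

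You differ from the paper only at the final step, and there your version is the one that actually works. The paper finishes by asserting $\widehat{X}^\perp\cap\widehat{Y}^\perp=\widehat{X}+\widehat{Y}$. That is false in general; the correct identity is $\widehat{X}^\perp\cap\widehat{Y}^\perp=(\widehat{X}+\widehat{Y})^\perp$. The paper also never addresses the closure. Your computation $a-b\in(\widehat{X}^\perp\cap\widehat{Y}^\perp)^\perp=\overline{\widehat{X}+\widehat{Y}}$, combined with the classical fact that $\widehat{X}^\perp+\widehat{Y}^\perp$ is closed if and only if $\widehat{X}+\widehat{Y}$ is closed, is exactly what is needed to reach the stated condition.

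Your sanity check is also half of a two-line proof. If $z\in X\cap Y$, then $a-b=(z-b)-(z-a)\in\widehat{Y}+\widehat{X}$. So both directions follow from $X-Y=(a-b)+\widehat{X}+\widehat{Y}$, with no closedness hypothesis at all. The duality argument (yours and the paper's) illustrates Proposition~\ref{convfeas}, but the result itself does not need it.
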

\begin{proof}
	Consider $G$ and $F$ defined in Proposition \ref{convfeas} with $C=X$ and $D=Y$. Then $G-F=\widehat{X}^\perp+\widehat{Y}^\perp$. Consequently, if $\widehat{X}^\perp+\widehat{Y}^\perp$ is closed then 
	\begin{center}
		$X\cap Y\neq\varnothing$ if and only if  $\displaystyle\inf_{u\in\mathcal{H}}\sigma_{X-Y}(u)=0.$
	\end{center}
	Meanwhile,
	\begin{align*}
		\inf_{u\in\mathcal{H}}\sigma_{X-Y}(u)=\inf_{u\in\mathcal{H}}\left(\langle a,u \rangle+\sigma_{\widehat{X}}(u)+\langle -b,u\rangle+\sigma_{\widehat{Y}}(u)\right)=\inf_{u\in\mathcal{H}}\left(\langle a-b,u \rangle+\imath_{\widehat{X}^\perp\cap\widehat{Y}^\perp}\right).
	\end{align*}
	Indeed, $\inf_{u\in\mathcal{H}}\sigma_{X-Y}(u)=\inf_{u\in\widehat{X}^\perp\cap\widehat{Y}^\perp}\langle a-b,u \rangle$. Since we know that $\widehat{X}^\perp\cap\widehat{Y}^\perp=\widehat{X}+\widehat{Y}$,
	\begin{center}
		$\displaystyle\inf_{u\in\mathcal{H}}\sigma_{X-Y}(u)=0$ if and only if $a-b\in\widehat{X}+\widehat{Y}.$
	\end{center}    
\end{proof}

As a remark to this corollary, if $(\widehat{X}^\perp,\widehat{Y}^\perp)$ has the IBAP then $X\cap Y\neq\varnothing$. This was already proved by Combettes and Reyes in \cite[Proposition 2.10]{co}.

\subsection{Signal Recovery}
Let us consider the best nonlinear approximation problem proposed by Combettes and Woodstuck. In view of \cite[Problem 1.1]{comwood}, we take the firmly nonexpansive operator $\pr_f$, $f\in\Gamma_0(\mathcal{H})$. To investigate their problem, we shall see in the next result that the set of all solutions in our system is a convex set.

For $f,g\in\Gamma_0(\mathcal{H})$ and $(p,q)\in\mathrm{dom}\,\partial f\times\mathrm{dom}\,\partial g$, define the set
$$S^{(f,g)}_{(p,q)}=\{x\in\mathcal{H}\vert\ \pr_fx=p\ \mathrm{and}\ \pr_g=q\}.$$

\begin{lem}
	For each $(p,q)\in\mathrm{dom}\,\partial f\times\mathrm{dom}\,\partial g$, $S^{(f,g)}_{(p,q)}$ is a closed convex set.
\end{lem}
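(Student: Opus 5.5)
By Proposition \ref{exist}, applied with the index set $\II=\{1,2\}$, $f_1=f$, $f_2=g$, $p_1=p$ and $p_2=q$, we have the set identity
$$S^{(f,g)}_{(p,q)}=\bigl(p+\partial f(p)\bigr)\cap\bigl(q+\partial g(q)\bigr).$$
(Here the defining condition $\pr_g=q$ is read as $\pr_gx=q$.) Since an intersection of two closed convex sets is closed and convex, the plan is to show that each translate $p+\partial f(p)$ and $q+\partial g(q)$ is closed and convex. A translate of a closed convex set is again closed and convex, so it suffices to prove that $\partial f(p)$ is closed and convex for any $f\in\Gamma_0(\HH)$ and any $p\in\dom\,\partial f$. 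The same argument then applies verbatim to $g$ and $q$.

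For this step I would argue directly from the definition of the subdifferential. Writing
$$\partial f(p)=\bigcap_{y\in\HH}\bigl\{u\in\HH\ \vert\ \langle y-p,u\rangle\leq f(y)-f(p)\bigr\},$$
each set in the intersection is handled as follows. If $y\in\dom f$, the right-hand side is a real number (it is finite because $p\in\dom\,\partial f\subseteq\dom f$), so the set is a closed half-space in $u$; this includes the degenerate case $y=p$, where the set is all of $\HH$. If $y\notin\dom f$, the right-hand side is $+\infty$ and the set is all of $\HH$. An arbitrary intersection of closed convex sets is closed and convex, so $\partial f(p)$ is closed and convex.

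Alternatively, I could cite the standard fact that $\partial f(p)$ is closed and convex for $f\in\Gamma_0(\HH)$ (see \cite[Proposition 16.4]{ba}). Another route is to write $S^{(f,g)}_{(p,q)}=\pr_f^{-1}(\{p\})\cap\pr_g^{-1}(\{q\})$: continuity of the proximity operators gives closedness immediately, but convexity is not evident from continuity alone, so the subdifferential description is the cleaner path.

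There is no real obstacle here. The only care needed is with the extended-valued right-hand side when $y\notin\dom f$, and with noting that $f(p)$ is finite, so that each constraint set is either a closed half-space or the whole space. Finally, I would remark that the solution set may be empty (as in Example \ref{exm:noibap}). The empty set is trivially closed and convex, so the lemma holds in that case too.
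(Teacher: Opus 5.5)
Your proposal is correct and follows the paper's proof: you use Proposition \ref{exist} to write $S^{(f,g)}_{(p,q)}=(p+\partial f(p))\cap(q+\partial g(q))$, and then use that subdifferentials of functions in $\Gamma_0(\mathcal{H})$ are closed and convex (the paper simply cites \cite[Proposition 16.4(iii)]{ba}). Your direct argument, writing $\partial f(p)$ as an intersection of closed half-spaces and copies of $\mathcal{H}$, is a valid self-contained replacement for that citation.
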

\begin{proof}
	Let $(p,q)\in\mathrm{dom}\,\partial f\times\mathrm{dom}\,\partial g$. Invoking Proposition \ref{exist},
	\begin{align*}
		S^{(f,g)}_{(p,q)}=(p+\partial f(p))\cap (q+\partial g(q)).
	\end{align*}
	But by \cite[Proposition 16.4(iii)]{ba}, $\partial f(p)$ and $\partial g(q)$ are closed convex sets in $\HH$. This further means $(p+\partial f(p))$ and $(q+\partial g(q))$ are also closed convex sets. Consequently, $S^{(f,g)}_{(p,q)}$ is a closed convex set.   
\end{proof}

In the next result, we shall see that a solution in \cite[Problem 1.1]{comwood} is precisely the projection onto a certain closed convex set. The given problem below can be interpreted as finding a best nonlinear approximation $x$ near the original signal $x_0$ in a Hilbert space $\mathcal{H}$ from a prescribed proximal point $p$ and some \emph{a priori} constraints on $x$ available from a countable family of closed convex sets $(C_j)_{j\in J}$.
\begin{prob}\rm
	\label{specialwood}
	Let $x_0\in\mathcal{H}$ and $f\in\Gamma_0(\mathcal{H})$. Let $(C_j)_{j\in J}$ be a countable family of nonempty closed convex subsets of $\mathcal{H}$. Set $C=\cap_{j\in J} C_j$. Given $p\in\mathrm{dom}\,\partial f$, find $x\in\mathcal{H}$ such that
	\begin{align}
		\label{min2}	&\min_{x\in\mathcal{H}} \Vert x-x_0\Vert\ \\
		\label{cons2}	&\mathrm{subject\ to}\ x\in C\ \mathrm{and}\ \pr_fx=p.
	\end{align}
	By \cite[Example 3.2(iv)]{ba}, $C$ is a closed convex set in $\HH$. Then, the convex constraints described in (\ref{cons2}) can be viewed as a system of nonlinear equation as follows
	\begin{equation*}
		\pr_{\sigma_C}x=0\ \mathrm{and}\ \pr_fx=p.
	\end{equation*}
	In view of Proposition \ref{exist}, Corollary \ref{lem1}, and Proposition \ref{ibapver2}, the recovery problem admits a solution if and only if one of the following holds:
	\begin{enumerate}
		\item[\rm{(i)}] $S^{(\sigma_C,f)}_{(0,p)}\neq\emptyset$;
		\item[\rm{(ii)}] $\dom\,\sigma_{\partial f(p)}\cap \mathrm{int}\left(\T_Cp\right)\neq\varnothing$ and $\inf_{u\in\mathcal{H}}(\sigma_{p+\partial f(p)}(u)+\sigma_C(-u))=0$;
		\item[\rm{(iii)}] $\pr_f(C)=\mathrm{dom}\,\partial f$.
	\end{enumerate}
	Then, the minimization problem in $(\ref{min2})$ with the constrained in $(\ref{cons2})$ can be reduced to finding $x\in\mathcal{H}$ such that 
	$$\min_{x\in S^{(\sigma_C,f)}_{(0,p)}}\Vert x-x_0\Vert.$$
	Since $\min_{x\in S^{(\sigma_C,f)}_{(0,p)}}\Vert x-x_0\Vert=\p_{S^{(\sigma_C,f)}_{(0,p)}}x_0$, $\displaystyle\p_{S^{(\sigma_C,f)}_{(0,p)}}x_0$ is the best nonlinear approximation near the signal $x_0$. To find $x\in\mathcal{H}$ numerically, we may consider the following Dykstra's algorithm \cite[Theorem 30.7]{co} applied to $C_j$ ($\forall j\in J$) and $p+\partial f(p)$:\bigskip
	
	\begin{algorithm}[H]
		\KwData{Let $\mathrm{card}(J)$ denotes the cardinality of $J$ and set $\omega=\mathrm{card}(J)+1$. For $0\leq i\leq \omega-1$, set $q_{-i}=0$. Consider the convex sets $(C_i)_{i\in J}$ and $C_\omega:=p+\partial f(p)$}
		\KwResult{Construct a sequence $(x_n)_{n\in\mathbb{N}}$ such that $x_n\rightarrow \p_{S^{(\sigma_C,f)}_{(0,p)}}x_0$}
		initialize $x_0$\;
		\For{$n=1,2,...$}{
			take $i=1+(n-1)\%\omega$\;
			$x_n=\p_{C_i}(x_{n-1}+q_{n-\omega})$\;
			$q_n=x_{n-1}+q_{n-\omega}-x_\omega$\;
		}
	\end{algorithm}	
\end{prob}

\section*{Acknowledgement}
The author gratefully acknowledges {\bf Noli N. Reyes} (1963-2020) for introducing the problem that motivated this study and sincerely thanks {\bf Louie John D. Vallejo} for valuable discussions and insightful comments that contributed to the development of this work.
\end{document}